\newtheorem{lemma}{Lemma}
\newtheorem{rem}{Remark}
\newtheorem{theorem}{Theorem}
\newtheorem{prop}{Proposition}
\newtheorem{assumption}{Assumption}
\title{\LARGE \bf Control of Uncertain PWA Systems using Difference-of-Convex Decompositions
}
\author{Siddharth H. Nair, Yvonne R. St\"urz 
\thanks{Email IDs:\{siddharth$\_$nair, y.stuerz\}@berkeley.edu}
}
\begin{document}

\maketitle
\thispagestyle{empty}
\pagestyle{empty}

\begin{abstract}
In this work, we analyse and design feedback policies for discrete-time Piecewise-Affine (PWA) systems with uncertainty in both the affine dynamics and the polytopic partition. The main idea is to utilise the Difference-of-Convex (DC) decomposition of continuous PWA systems to derive quadratic Lyapunov functions as stability certificates and stabilizing affine policies in a higher dimensional space. When projected back to the state space, we obtain time-varying PWQ Lyapunov functions and time-varying PWA feedback policies. 

\end{abstract}

\section{Introduction}
A discrete-time Piecewise-Affine (PWA) system is described by the dynamics
\begin{align}\label{eq:dyn}
    x(t+1)&=f(x(t),u(t))\\
          &=A_ix(t)+B_iu(t)+c_i,~~\forall(x(t),u(t))\in\Omega_i\nonumber
\end{align}
where $x(t), u(t)$ are the state and input respectively at time $t$, and $\Omega_i$, $i=1,\dots, n_r$ form a polytopic partition of the state and input space $\Omega=\mathcal{X}\times\mathcal{U}\subset\mathbb{R}^{n+m}$, denoted as $\cup\Omega_i=\Omega$. PWA systems are a versatile modeling framework; They can approximate smooth nonlinear dynamics arbitrarily well \cite{sontag1981nonlinear} and have also been shown to be equivalent to certain classes of hybrid systems such as Mixed Logical Dynamical (MLD) systems and Linear Complementarity (LC) systems under mild conditions \cite{heemels2001equivalence}. More recently in \cite{hempel2013every}, it was shown that continuous PWA systems can be written as the difference of two continuous convex PWA functions and that they can be written as optimizing processes.

The analysis and control of PWA systems has been widely studied in the literature. In \cite{ferrari2002analysis, mignone2000stability, hovd2013relaxing}, stabilizing feedback controllers are synthesized using quadratic and Piecewise-Quadratic (PWQ) Lyapunov functions as stability certificates. PWQ Lyapunov functions, being a generalisation of quadratic Lyapunov functions, provide less conservative stability certificates but are more difficult to synthesize. They require an explicit description of the polytopic partition $\sqcup\Omega_i=\Omega$ and need to account for all possible transitions between the polytopic pieces $\Omega_i$ explicitly. Similarly, the works \cite{rubagotti2011stability, prajna2003analysis} consider piecewise-linear and piecewise-polynomial Lyapunov functions for stability.  A significantly more challenging problem is the synthesis of optimal feedback control of PWA systems, studied in \cite{bemporad2000optimal, borrelli2005dynamic, kerrigan2002optimal}. Here, the control problem is complicated by the fact that the stabilizing controller must transition between polytopic parititions in an optimal fashion over  a finite-time horizon. In this article, we focus on the simpler task of synthesizing stabilizing feedback control but for uncertain PWA systems.

A PWA system is said to be uncertain if the dynamics function $f(\cdot)$ is not known exactly. Uncertain PWA systems are relatively less studied in the literature due to the complexity of uncertain switching systems. The uncertainty descriptions considered in the literature assume some form of parametric uncertainty in the matrices $A_i$, $B_i$, $c_i$ but with known, static partition $\cup\Omega_i=\Omega$ \cite{gao2009robust,trimboli2011stability,hovd2018parameter}. This uncertainty description is justified when the PWA framework is used to approximate a smooth nonlinear system. However for a system that is inherently hybrid and uncertain, the polytopic partition $\cup\Omega_i(t)=\Omega$ itself may be dynamic with time-varying polytopic pieces $\Omega_i(t)$.

In this article, we investigate the control of uncertain PWA systems where both the affine dynamics and the polytopic partition are uncertain leveraging results in the control and analysis of nominal PWA systems from \cite{hempel2014inverse, groff2019stability, aydinoglu2019contact}. The main challenge in the synthesis of stabilizing control of such systems is the derivation of a stability certificate that does not require an explicit description of the uncertain, time-varying polytopic partition $\cup\Omega_i(t)=\Omega$. An obvious solution to this problem would be to construct a common quadratic Lyapunov function on the state space but this has been shown to be overly conservative \cite{ferrari2002analysis}. We show how this issue can be addressed for continuous PWA systems for which a Difference of Convex (DC) functions decomposition can be derived \cite{hempel2013every}. Inspired by graph approximation techniques in \cite{groff2019stability, aydinoglu2019contact, nair2020modeling}, we show that the DC decomposition permits a construction of affine stabilizing policies with quadratic Lyapunov functions in a higher dimensional space, which when projected back to the state space result in time-varying PWA policies and time-varying PWQ Lyapunov functions.

The remainder of the article is organized as follows. We briefly discuss our notations in Section~\ref{sec:notations} before formally stating our problem statement in Section~\ref{sec:PD}. Section~\ref{sec:lift} presents the key technique we use for expressing the PWA dynamics in a higher dimensional space. Section~\ref{sec:CLF_pol} describes our synthesis procedure in detail which we demonstrate on two applications in Section~\ref{sec:numerics}.

\section{Notation} \label{sec:notations}

$\Vert\cdot\Vert$ denotes the Euclidean norm in $\mathbb{R}^n$ unless explicitly stated otherwise. With slight abuse of notation, $\Vert\cdot\Vert$ also denotes the norm on $\mathbb{R}^{n\times n}$ induced by the Euclidean norm. $I_n\in \mathbb{R}^{n\times n}$ is the identity matrix  and $\mathbf{0}$ is a matrix of zeros with relevant dimensions. $[x]_i$ denotes the $i$th element of vector $x\in\mathbb{R}^n$, $[M]_{ij}$ denotes the $i$th element in the $j$th column of matrix $M\in\mathbb{R}^{n\times m}$ and $[M]_i$ denotes its $i$th column. We denote the Hermitian of a real square matrix $M$ as $\text{He}(M)=\frac{M+M^\top}{2}$.  Given matrices $A_1,\dots, A_N\in\mathbb{R}^{n\times m}$, we denote their convex hull as the set $\text{conv}\{A_k\}_{k=1}^N\subset\mathbb{R}^{n\times m}$.  Given matrices $A_1,\dots A_N\in\mathbb{R}^{n\times n}$, we denote by $\text{blkdiag}(A_1,\dots,A_N)\in\mathbb{R}^{Nn\times Nn}$ as the matrix with block diagonals given by $A_1,\dots, A_N$ and zeros everywhere else. As a shorthand, we write the symmetric block matrix $\begin{bmatrix}A &B\\B^\top&C\end{bmatrix}$ as $\begin{bmatrix}A&B\\\star&C\end{bmatrix}$.
\section{Problem Formulation}\label{sec:PD}
We start with the Difference of Convex (DC) decomposition of $f(\cdot)$ in \eqref{eq:dyn}. As shown in \cite{hempel2013every}, all continuous PWA functions defined on convex polytopes admit a DC decomposition and so we restrict our focus to such systems.
\begin{assumption}\label{assmp:PWA}
The PWA system dynamics $f(\cdot)$ in \eqref{eq:dyn} is continuous  $\forall(x(t),u(t))\in\Omega$ where $\Omega=\mathcal{X}\times\mathcal{U}$ is a convex set with polyhedral partition $\Omega=\cup\Omega_i$.
\end{assumption}
Consider the DC decomposition of the dynamics \eqref{eq:dyn} given by
\begin{align}\label{eq:DC}
    x(t+1)&=Ax(t)+\gamma(x(t))-\eta(x(t))+Bu(t),\\
    [\gamma(x)]_i&=\max_{1\leq j\leq \alpha}\ [E_j]_i x+[d_j]_i,\nonumber\\
    [\eta(x)]_i&=\max_{1\leq l\leq \beta}\ [H_l]_i x+[f_l]_i,~~ i=1,\dots, n \nonumber
\end{align}
where without loss of generality, we have assumed that the convex functions $\gamma(\cdot),\eta(\cdot)$ are functions of state $x(t)$ only and the number of affine functions $\alpha$ and $\beta$ for $\gamma(\cdot)$ and $\eta(\cdot)$ respectively are the same for each dimension $i=1,\dots,n$. This DC decomposition can be obtained using the constructive procedure outlined in Lemma 4 of \cite{hempel2013every}.
\begin{rem} If $\gamma(\cdot), \eta(\cdot)$ are functions of $u$, we augment the state space as $\tilde{x}=[x^\top u^\top]$ by adding integrators.
\end{rem}
\begin{rem}
If $[\gamma(x)]_i=\max_{1\leq j\leq \alpha_i}\ [E_j]_i x+[d_j]_i$, define $\alpha=\max_{1\leq i\leq n}\alpha_i$ and add $\alpha-\alpha_i$ affine functions that lower bound the convex function $[\gamma(x)]_i$ on the set $\mathcal{X}$ to get $[\gamma(x)]_i=\max_{1\leq j\leq \alpha}\ [E_j]_i x+[d_j]_i$. A similar procedure can be repeated for $\eta(x)$.
\end{rem}
Suppose that the DC system in \eqref{eq:DC} is uncertain, i.e., the matrices $E_j, d_j, H_j,f_j$ are uncertain and time-varying, but bounded. We formalize this in the assumption below.
\begin{assumption}\label{assmp:unc}
The matrix-valued random variable $\mathbf{W}(t)\in\mathbb{R}^{n\times (n+1)(\alpha+\beta)}$ given by \begin{align*}\mathbf{W}(t)=[E_1(t)\ d_1(t)\dots E_\alpha(t)\ d_\alpha(t)\ H_1(t)\ f_1(t)\\
\dots H_\beta(t)\ f_\beta(t)]
\end{align*}is independent, identically distributed (i.i.d) for all time $t\geq0$ and supported on a known compact convex polytope $\mathcal{W}=\text{conv}\{W^k\}_{k=1}^{n_V}\subset\mathbb{R}^{n\times (n+1)(\alpha+\beta)}$
where $W^k=[E^k_1\ d^k_1\dots E^k_\alpha\ d^k_\alpha\ H^k_1\ f^k_1\dots H^k_\beta\ f^k_\beta]$.
\end{assumption}
To emphasize that system \eqref{eq:DC} is time-varying, we write the uncertain convex functions as $\gamma(t,x(t)), \eta(t,x(t))$. This system description captures the scenario of PWA systems with uncertainty in both the PWA function definitions ($A_i,B_i,c_i$ for \eqref{eq:dyn}) and the polytopic partitions ($\Omega_i$ for \eqref{eq:dyn}).

We also assume that the origin $x=\mathbf{0}$ is an unforced ($u=\mathbf{0}$) equilibrium for \eqref{eq:DC}. More formally, we assume the following \textit{matching} condition.
\begin{assumption}\label{assmp:eqlbrm}
The origin is an unforced equilibrium of system \eqref{eq:DC}, i.e.,
$$\gamma(t,\mathbf{0})=\eta(t,\mathbf{0})\ \forall\mathbf{W}(t)\in\mathcal{W},  t\geq0\Leftrightarrow\max_{1\leq j\leq \alpha} [d_j^k]_i=\max_{1\leq l\leq \beta}[f_l^k]_i $$
for all $i=1,\dots, n$ and $k=1,\dots, n_V$.
\end{assumption}

In this article, we aim to synthesize PWA feedback policies \begin{align}\label{eq:tvPWA_pol}
    u(t,x)=F_i(t)x+g_i(t)\ \forall x\in\tilde{\mathcal{X}}_i(t)
\end{align} with disjoint partition $\tilde{\mathcal{X}}_i(t)$, s.t.  $\cup\tilde{\mathcal{X}}_i(t)=\mathcal{X}$,  such that the origin is exponentially stable for the closed-loop system. Notice that the partition on which the policy is defined is \textit{time-varying}.   This class of uncertain PWA systems has not been studied in the literature to the best of our knowledge. Most approaches for feedback control synthesis for PWA systems in the literature rely on an explicit description of the \text{static} polytopic partitions and construct affine policies local to each partition. This is impossible for the PWA systems considered in this work because the polytopic partitions are dynamic (time-varying) and unknown. Therefore, to deal with these uncertain hybrid systems, we present a technique that allows us to design PWA policies without explicitly computing the polytopic partitions. We illustrate our approach on two uncertain PWA systems in Section~\ref{sec:numerics}: A) an inverted pendulum against a soft wall and B) a robot collaborating with a human to transport a payload.
\section{Lifted Quadratic Representations for Convex Piecewise Affine Functions}\label{sec:lift}
In this section, we describe the key technique for verifying if a continuous PWA function satisfies a given quadratic inequality without explicitly computing the polytopic partitions. First, consider the element-wise convex PWA function $\lambda: \mathbb{R}^n\rightarrow \mathbb{R}^n$, $$\lambda(x)=\max_{1\leq j\leq \xi} \{E_{j}x+d_j\}$$ where the $\max\{\cdot\}$ operation is defined element-wise. Recursively define the following functions 

\begin{align}\label{eq:rec_lmbd_def}
    \lambda_j(x)=\max\{\lambda_{j-1}(x), E_{j}x+d_j\}\quad j=2,\dots,\xi
\end{align}

with $\lambda_1(x)=E_{1}x+d_1$. Note that due to the fact that the $\max\{\cdot\}$ operation can be equivalently written in a nested fashion (for e.g., $\max\{p,q,r\}=\max\{\max\{p,q\},r\}$), we actually have $\lambda_\xi(x)=\lambda(x)$. 

Each function $\lambda_{j}(x)$ defined in \eqref{eq:rec_lmbd_def} can be equivalently written as the unique solution to $2n$ affine inequalities and $1$ quadratic equality in $(x,\lambda_{j-1}(x))$ as shown below. 
\begin{align}\label{eq:max_solnset}
&\lambda_1(x)=E_1x+d_1,\nonumber\\
    &\lambda_{j}(x)\geq \lambda_{j-1}(x),\ \lambda_{j}(x)\geq E_{j}x+d_j,\\
    &(\lambda_{j}(x)-\lambda_{j-1}(x))^\top(\lambda_j(x)-E_{j}x-d_j)=0,\ j=2,\dots, \xi.\nonumber
\end{align}
Given any $x\in\mathbb{R}^n$, the functions $\lambda_j(x)
~\forall j=1,\dots,\xi$ are the unique solutions to \eqref{eq:max_solnset}.
Consequently, the function $\lambda(x)$ (or equivalently, its graph $(x,\lambda(x))$) is given by the solution to a set of affine inequalities and quadratic equalities given in \eqref{eq:max_solnset} in the space $(x,\lambda_1, \dots,\lambda_\alpha)\in\mathbb{R}^{n(\xi+1)}$ where $x,\lambda_1,\dots,\lambda_\alpha$ are arbitrary variables. The next proposition offers a useful tool to construct a convex relaxation of the set of solutions $(x,\lambda(x))$ given by \eqref{eq:max_solnset}.  

\begin{prop}\label{prop:lmbd_graph}
Let $$\chi(x)=[1\ x^\top\ \lambda_1(x)^\top\ \lambda_2(x)^\top\dots \lambda_{\xi}(x)^\top]^\top$$ and define the selection matrices $S_j\in\mathbb{R}^{(1+3n)\times (n(\xi+1)+1)}$ such that $S_j\chi(x)=[1\ x^\top\ \lambda_{j-1}(x)^\top\ \lambda_{j}(x)^\top]^\top$ for $j=2,\dots, \xi$ and $S_1\in\mathbb{R}^{(1+2n)\times(n(\xi+1)+1)}$ such that $S_1\chi(x)=[1\ x^\top\ \lambda_1(x)^\top]^\top$. Given any diagonal matrices $D_j\in\mathbb{R}^{n\times n}$ and any non-negative matrices $M_j\in\mathbb{R^+}^{n\times n}$ $~\forall j=1,\dots, \xi$, define the set 
\begin{align}\label{eq:lmb_grph}
    \mathfrak{X}(x)=\{\chi&=[1\ x^\top\ \lambda_1^\top\dots\lambda_\xi^\top]^\top \in\mathbb{R}^{n(\xi+1)+1}:\nonumber\\ &\chi^\top S_j^\top G_j^\top D_j \bar{G}_j S_j\chi=0,~\chi^\top S_j^\top G_j^\top M_j \bar{G}_jS_j\chi\geq 0\nonumber\\
    &\forall j=1,\dots,\xi.\}
\end{align}
where $G_j=[0\ \mathbf{0}\ -I_n\ I_n]$, $\bar{G}_j=[-d_j\ -E_{j}\ \mathbf{0}\ I_n]$ for $j=2,\dots\xi$ and $G_1=[1\  \mathbf{0}\ \mathbf{0}]$, $\bar{G}_1=[-d_1\ -E_1\ I_n]$. Then 
$$\chi(x)\in\mathfrak{X}(x).$$
\end{prop}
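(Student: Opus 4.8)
The plan is to reduce the two matrix-quadratic conditions defining $\mathfrak{X}(x)$ to the scalar, component-wise properties of the element-wise maximum, and to verify them at the true graph point $\chi(x)$. First I would evaluate the linear maps appearing inside the quadratic forms. Using the definitions of the selection matrices together with $G_j$ and $\bar{G}_j$, a direct substitution gives, for each $j=2,\dots,\xi$,
\begin{align*}
G_j S_j \chi(x) &= \lambda_j(x)-\lambda_{j-1}(x),\\
\bar{G}_j S_j \chi(x) &= \lambda_j(x)-E_j x - d_j,
\end{align*}
so that both quadratic forms have the shape $a_j^\top(\cdot)\,b_j$ with $a_j=\lambda_j(x)-\lambda_{j-1}(x)$ and $b_j=\lambda_j(x)-E_j x-d_j$. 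For the base index $j=1$ the map $\bar{G}_1 S_1 \chi(x)=\lambda_1(x)-E_1 x-d_1=0$ because $\lambda_1(x)=E_1 x+d_1$ by definition, so both conditions hold trivially; the content of the proof lies in the recursive steps $j\geq 2$.

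The key structural fact I would then invoke is that $\lambda_j(x)=\max\{\lambda_{j-1}(x),E_j x+d_j\}$ is taken \emph{element-wise}. Hence for every component $i$,
\[
[a_j]_i\geq 0,\qquad [b_j]_i\geq 0,\qquad [a_j]_i\,[b_j]_i=0,
\]
where the two inequalities are exactly the affine inequalities of \eqref{eq:max_solnset} read coordinate-by-coordinate, and the complementarity $[a_j]_i[b_j]_i=0$ holds because at each coordinate $i$ the maximum equals one of its two arguments, annihilating one of the factors. This is simply the component-wise refinement of the scalar quadratic equality in \eqref{eq:max_solnset}.

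With these three component-wise relations the two conditions follow by expansion. For the equality, writing $D_j=\mathrm{diag}(\delta_1,\dots,\delta_n)$ gives $a_j^\top D_j b_j=\sum_i \delta_i\,[a_j]_i[b_j]_i=0$, since each product vanishes; this holds for every diagonal $D_j$. For the inequality, expanding $M_j$ entrywise gives $a_j^\top M_j b_j=\sum_{i,k}[M_j]_{ik}\,[a_j]_i[b_j]_k$, and each summand is a product of three non-negative numbers, so the total is non-negative for every entrywise non-negative $M_j$. Combining the trivial $j=1$ case with these two computations establishes $\chi(x)\in\mathfrak{X}(x)$.

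The main thing to get right is the bookkeeping of the first step: verifying that the selection matrices $S_j$ and the blocks $G_j,\bar{G}_j$ really do collapse the abstract quadratic forms into the products $a_j^\top(\cdot)b_j$ of the two geometrically meaningful residuals. Once that identification is made, the argument is driven entirely by the elementary observation that an element-wise maximum yields component-wise complementarity together with non-negative residuals; the roles of the diagonal $D_j$ (isolating the complementarity terms) and the non-negative $M_j$ (preserving sign under the bilinear form) are then precisely tailored to these two facts. I would not expect any genuine difficulty beyond this identification and the careful component-wise reading of \eqref{eq:max_solnset}.
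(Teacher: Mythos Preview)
Your proposal is correct and follows essentially the same approach as the paper's proof: identify $G_jS_j\chi(x)$ and $\bar{G}_jS_j\chi(x)$ with the residuals $\lambda_j(x)-\lambda_{j-1}(x)$ and $\lambda_j(x)-E_jx-d_j$, then use the component-wise non-negativity and complementarity of the element-wise maximum to verify the diagonal equality and the non-negative-weight inequality, treating $j=1$ as the trivial base case. If anything, your derivation of the equality condition is slightly cleaner, since you invoke the component-wise complementarity $[a_j]_i[b_j]_i=0$ directly rather than passing through the aggregate identity in \eqref{eq:max_solnset}.
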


\begin{proof}
First see that for any $j=2,\dots,\xi$, we have $G_jS_j\chi(x)=\lambda_j(x)-\lambda_{j-1}(x)$ and $\bar{G}_jS_j\chi(x)=\lambda_j(x)-A_{j}x-d_j$. For the equality in \eqref{eq:max_solnset}, we have 
\begin{align*}
    &(\lambda_{j}(x)-\lambda_{j-1}(x))^\top(\lambda_j(x)-E_{j}x-d_j)=0\\
    \Rightarrow&([\lambda_{j}(x)-\lambda_{j-1}(x)]_i)[D_j]_{ii}([\lambda_j(x)-E_{j}x-d_j]_i)=0, \\
    &~~~~~~~~~~~~~~~~~~~~~~~~~~~~~~~~\forall i=1,\dots, n\\
    \Leftrightarrow& \chi^\top S_j^\top G^\top D_j \bar{G}_j S_j\chi=0
\end{align*}
where $D_j\in\mathbb{R}^{n\times n}$ is any diagonal matrix.
For the inequalities in \eqref{eq:max_solnset}, we have for any non-negative matrix $M_j\in\mathbb{R^+}^{n\times n}$ that
\begin{align*}
    &\lambda_{j}(x)\geq \lambda_{j-1}(x),\ \lambda_{j}(x)\geq E_{j}x+d_j\\
    \Rightarrow (&[\lambda_{j}(x)-\lambda_{j-1}(x)]_i)([\lambda_j(x)-E_{j}x-d_j]_k)\geq0,\\
    &~~~~~~~~~~~~~~~~~~~~~~~~~~~~~~~~\forall i,k=1,\dots, n\\
    \Rightarrow (&[\lambda_{j}(x)-\lambda_{j-1}(x)]_i)[M_j]_{ik}([\lambda_j(x)-E_{j}x-d_j]_k)\geq0,\\
    &~~~~~~~~~~~~~~~~~~~~~~~~~~~~~~~~\forall i,k=1,\dots, n\\
    \Leftrightarrow & \chi^\top S_j^\top G^\top M_j G_jS_j\chi\geq 0.
\end{align*}

For $j=1$, see that $G_1S_1\chi(x)=1$ and $\bar{G}_1S_1\chi(x)=\lambda_1(x)-E_{1}x-d_1$. Proceeding as above, we get the result for $j=1$ where the inequality $\chi(x)S_1^\top G_1^\top M_1\bar{G}_1S_1\chi(x)\geq 0$ holds trivially with equality.
\end{proof}
The above proposition provides us with a tool to construct a sufficiency certificate for a quadratic inequality $[1\ x^\top\ \lambda_1^\top\ \dots\lambda_\xi^\top]P[1\ x^\top\ \lambda_1^\top\ \dots\lambda_\xi^\top]^\top\geq 0$ to hold given that $[1\ x^\top \lambda_1^\top\dots\lambda_\xi^\top]^\top=\chi(x)$. Using the S-procedure, if $\exists D_j\in\mathbb{R}^{n\times n}, M_j\in\mathbb{R^+}^{n\times n}$ for $j=1,\dots,\xi$ such that $$P-\sum_{j=1}^\xi\text{He}(S^\top_jG_j^\top(M_j+D_j)\bar G_jS_j)\geq 0,$$
then $\chi(x)^\top P\chi(x)\geq 0$. This can be readily verified by multiplying both sides of the above inequality by $\chi(x)$ and using Proposition~\ref{prop:lmbd_graph}.

To use Proposition~\ref{prop:lmbd_graph} for a general continuous PWA function $\lambda:\mathbb{R}^n\rightarrow\mathbb{R}^n$, we first obtain its DC decomposition $\lambda(x)=\gamma(x)-\eta(x)$. Then we can use proposition \ref{prop:lmbd_graph} for $\gamma(\cdot)$ and $\eta(\cdot)$ to characterize the set $(x,\gamma(x), \eta(x))$ in the space $(x, \gamma_1,\dots,\gamma_\alpha, \eta_1,\dots, \eta_\beta)\in\mathbb{R}^{n(1+\alpha+\beta)}$. We illustrate this procedure by synthesizing a PWA policy and PWQ Lyapunov function for the uncertain DC system \eqref{eq:DC}.

\section{Synthesis of Stabilizing PWA Policies}\label{sec:CLF_pol}
In this section, we provide a procedure for synthesizing stabilizing PWA feedback policies for uncertain DC systems \eqref{eq:DC} using PWQ Control Lyapunov Functions (CLFs) as stability certificates. First, we provide conditions under which a function qualifies as a CLF which in turn, establishes the existence of a stabilizing controller for system \eqref{eq:DC}. Second, we propose a redefinition of the DC decomposition in model \eqref{eq:DC} without changing the system dynamics. Finally, we use the redefined model to synthesize a PWQ CLF and a PWA control policy using Linear Matrix Inequalties (LMIs).
\subsection{Stability via Control Lyapunov Functions}
\begin{lemma}\label{lem:CLF}
Suppose that there exists a function $V:\mathbb{R}^+\times\mathcal{X}\rightarrow\mathbb{R}$ and scalars $\rho_1,\rho_2\in\mathbb{R}^+, \rho_3\in(0,1)$ such that:
\begin{enumerate}
    \item $\rho_1\Vert x\Vert^2\leq V(t,x)\leq \rho_2\Vert x\Vert
    ^2\ \forall x\in\mathcal{X}, t\geq 0$
    \item $\forall t\geq0, x(t)\in\mathcal{X},\exists u^*(t,x(t))\in\mathcal{U} :$\\ $V(t+1,x(t+1))\leq\rho_3 V(t,x(t)),\ \forall \mathbf{W}(t)\in\mathcal{W}$.
\end{enumerate}
Then $V(t,x)$ is a Control Lyapunov function for system \eqref{eq:DC}, and the origin of the closed-loop system with $u(t)=u^*(t,x(t))$ is exponentially stable.
\end{lemma}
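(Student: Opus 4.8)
The plan is to prove exponential stability by the standard discrete-time Lyapunov argument: iterate the one-step decrease condition to obtain geometric decay of $V$ along the closed-loop trajectory, and then transfer this decay to the state norm using the quadratic sandwich in condition~(1). First I would close the loop by applying $u(t)=u^*(t,x(t))$ supplied by condition~(2). The crucial observation is that the decrease $V(t+1,x(t+1))\leq\rho_3 V(t,x(t))$ is required to hold for \emph{every} realization $\mathbf{W}(t)\in\mathcal{W}$, so in particular it holds along the trajectory generated by whichever uncertainty sequence is actually realized. Hence, at every $t\geq0$ along the closed-loop trajectory, the one-step inequality is valid.

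Next I would iterate this inequality. By a one-line induction on $t$, $V(t,x(t))\leq\rho_3^{\,t}V(0,x(0))$ for all $t\geq0$. I would then invoke condition~(1): the lower bound $\rho_1\Vert x(t)\Vert^2\leq V(t,x(t))$ and the upper bound $V(0,x(0))\leq\rho_2\Vert x(0)\Vert^2$. Chaining these three estimates yields $\rho_1\Vert x(t)\Vert^2\leq\rho_3^{\,t}\rho_2\Vert x(0)\Vert^2$, i.e. $\Vert x(t)\Vert\leq\sqrt{\rho_2/\rho_1}\,(\sqrt{\rho_3})^{\,t}\Vert x(0)\Vert$. Since $\rho_3\in(0,1)$ we have $\sqrt{\rho_3}\in(0,1)$, so the state norm decays geometrically with rate $\sqrt{\rho_3}$ and overshoot constant $\sqrt{\rho_2/\rho_1}$, which is precisely the definition of exponential stability of the origin. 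This establishes the claim that $V$ is a CLF and the closed loop is exponentially stable.

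The one genuine subtlety, which I expect to be the main point to handle carefully rather than a hard technical obstacle, is the \emph{well-posedness} of the iteration: conditions~(1) and~(2) are stated for $x\in\mathcal{X}$, so the argument presupposes that the closed-loop trajectory remains in $\mathcal{X}$ at every $t$, and that the controller $u^*(t,x(t))\in\mathcal{U}$ is admissible. The monotone decrease of $V$ confines the trajectory to the sublevel set $\{x: V(t,x)\leq V(0,x(0))\}$; provided this sublevel set is contained in $\mathcal{X}$ (equivalently, $\mathcal{X}$ is forward invariant under the closed-loop dynamics for the relevant initial conditions), the inductive step applies uniformly over all $\mathbf{W}(t)\in\mathcal{W}$. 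I would therefore state the exponential stability conclusion relative to the region of attraction carved out by such sublevel sets, and note that the robustness over $\mathcal{W}$ follows automatically because condition~(2) guarantees a \emph{single} admissible $u^*$ achieving the decrease simultaneously for all admissible uncertainty realizations.
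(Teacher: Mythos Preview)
Your proof is correct and mirrors the paper's argument almost exactly: iterate the decrease condition to get $V(t,x(t))\leq\rho_3^{\,t}V(0,x(0))$, then sandwich with $\rho_1\Vert x(t)\Vert^2\leq V(t,x(t))$ and $V(0,x(0))\leq\rho_2\Vert x(0)\Vert^2$ to obtain $\Vert x(t)\Vert\leq\sqrt{\rho_3^{\,t}\rho_2/\rho_1}\,\Vert x(0)\Vert$. Your added remark on forward invariance of $\mathcal{X}$ is a point the paper glosses over (it simply assumes $x(t)\in\mathcal{X}$ throughout), but the core approach is identical.
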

\begin{proof}
Let $V(t,x)$ be a CLF for system \eqref{eq:DC}. Then for any initial condition $x(0)=x_0\in\mathcal{X}$, we have $V(0,x(0))\leq \rho_2 \Vert x_0\Vert^2$ using the second inequality in property 1). Using property 2) for trajectories of the uncertain system \eqref{eq:DC} in closed-loop with $u^*(t,x(t))$, we also have 
$$ V(t,x(t))\leq \rho_3 V(t-1,x(t-1))\leq\dots\leq \rho_3^tV(0,x(0))$$
for all $\mathbf{W}(0),\dots,\mathbf{W}(t-1)\in\mathcal{W}$.
Using the first inequality in property 1), we get $\rho_1\Vert x(t)\Vert^2\leq V(t,x(t))$. Combining these inequalities, we get $\Vert x(t)\Vert\leq \sqrt{\rho_3^t\frac{\rho_2}{\rho_1}}\Vert x_0\Vert$ which in turn, implies $\lim_{t\rightarrow\infty}\Vert x(t)\Vert=0$ because $\rho_3\in(0,1)$ and $\frac{\rho_2}{\rho_1}>0$. Since $x_0$ was arbitrary, the proof is complete.
\end{proof}
In order to present CLF candidates for system \eqref{eq:DC}, we redefine its DC decomposition without effectively changing the dynamics in the next section.
\subsection{Redefinition of DC Decomposition}
\begin{lemma}\label{lem:DC_redef}
 Let $d_*^{k}\in\mathbb{R}^n$ such that $$[d_*^{k}]_i=\max_{1\leq j\leq \alpha} [d_j^k]_i=\max_{1\leq l\leq \beta}[f_j^k]_i$$ for system \eqref{eq:DC}, which exists by assumption \ref{assmp:eqlbrm}.  Let $\tilde{E}^k_j=E^k_j, \tilde{d}^k_j=d^k_j-d^k_*$, for $j=1,\dots,\alpha$ and $\tilde{H}^k_j=H^k_j, \tilde{f}^k_l=f^k_l-d^k_*$ , for $l=1,\dots,\beta$. Accordingly redefine the matrices $\tilde{\mathbf{W}}(t)$, $\tilde{W}^k$ and polytope $\tilde{\mathcal{W}}$ as in Assumption \ref{assmp:unc}. Then the redefined DC decomposition of system \eqref{eq:DC} given by 
 \begin{align}\label{eq:DC_redef}
   x(t+1)&=Ax(t)+\tilde{\gamma}(t,x(t))-\tilde{\eta}(t,x(t))+Bu(t),\\
     [\tilde{\gamma}(t,x)]_i&=\max_{1\leq j\leq \alpha}\ [\tilde{E}_j(t)]_i x+[\tilde{d}_j(t)]_i,\nonumber\\
    [\tilde{\eta}(t,x)]_i&=\max_{1\leq l\leq \beta}\ [\tilde{H}_l(t)]_i x+[\tilde{f}_l(t)]_i,~~ i=1,\dots, n\nonumber
 \end{align}
 satisfies the following properties
    \begin{enumerate}
        \item $\gamma(t,x)-\eta(t,x)=\tilde{\gamma}(t,x)-\tilde{\eta}(t,x), \forall t\geq 0,x\in\mathcal{X}$
        \item $\tilde{\gamma}(t,\mathbf{0})=\tilde{\eta}(t,\mathbf{0})=\mathbf{0}, \forall t\geq 0$
        \item $\exists C_\gamma, C_\eta\in\mathbb{R}^+\ : \Vert\tilde{\gamma}(t,x)\Vert\leq C_\gamma\Vert x\Vert, \Vert\tilde{\eta}(t,x)\Vert\leq C_\eta\Vert x\Vert$.
    \end{enumerate}
\end{lemma}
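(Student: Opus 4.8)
The plan is to verify the three properties by unwinding the definition of the element-wise maximum, treating the time index $t$ (equivalently, the realization $\mathbf{W}(t)\in\mathcal{W}$) as fixed throughout. The central observation is that for a fixed realization the matching Assumption~\ref{assmp:eqlbrm} guarantees that the common maximal offset $[d_*(t)]_i=\max_{1\le j\le\alpha}[d_j(t)]_i=\max_{1\le l\le\beta}[f_l(t)]_i$ is well defined; at the vertices $W^k$ this recovers the $d^k_*$ of the statement. The redefinition then subtracts this single vector $d_*(t)$ from every affine offset of both $\tilde\gamma$ and $\tilde\eta$ while leaving the linear parts $E_j,H_l$ untouched.

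For property 1, I would use that the element-wise $\max$ is equivariant under adding a common constant to all of its arguments: subtracting $[d_*(t)]_i$ from each of the $\alpha$ offsets defining $[\gamma(t,x)]_i$ gives $[\tilde\gamma(t,x)]_i=[\gamma(t,x)]_i-[d_*(t)]_i$, and identically $[\tilde\eta(t,x)]_i=[\eta(t,x)]_i-[d_*(t)]_i$. The common shift cancels in the difference, so $\tilde\gamma-\tilde\eta=\gamma-\eta$ pointwise, which also confirms that \eqref{eq:DC_redef} leaves the dynamics \eqref{eq:DC} unchanged. Property 2 is then immediate by evaluating at $x=\mathbf{0}$: $[\tilde\gamma(t,\mathbf{0})]_i=\max_j[\tilde d_j(t)]_i=\max_j[d_j(t)]_i-[d_*(t)]_i=0$ by the definition of $d_*(t)$, and the same computation with $f_l,\tilde f_l$ gives $\tilde\eta(t,\mathbf{0})=\mathbf{0}$.

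Property 3 is where the real work lies, and it is the step I expect to be the main obstacle. The key is to exploit property 2 in the sharp form $\max_j[\tilde d_j(t)]_i=0$: for each $i$ one offset is exactly zero while all others are non-positive. For the upper bound I would drop the (non-positive) offsets to get $[\tilde\gamma(t,x)]_i\le\max_j[\tilde E_j(t)]_i x\le(\max_j\Vert[\tilde E_j(t)]_i\Vert)\Vert x\Vert$ by Cauchy--Schwarz; for the lower bound I would evaluate the max at the index achieving the zero offset to get $[\tilde\gamma(t,x)]_i\ge -(\max_j\Vert[\tilde E_j(t)]_i\Vert)\Vert x\Vert$. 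Hence $|[\tilde\gamma(t,x)]_i|\le(\max_j\Vert[\tilde E_j(t)]_i\Vert)\Vert x\Vert$, and summing over $i$ yields $\Vert\tilde\gamma(t,x)\Vert\le C_\gamma\Vert x\Vert$. The $t$-uniformity of $C_\gamma$ follows because $\tilde E_j(t)=E_j(t)$ is a block of $\mathbf{W}(t)$, which lies in the fixed compact set $\mathcal{W}$ of Assumption~\ref{assmp:unc}; thus $\max_j\Vert[E_j]_i\Vert$ is uniformly bounded and one may take $C_\gamma=(\sum_i\sup_{\mathbf{W}\in\mathcal{W}}\max_j\Vert[E_j]_i\Vert^2)^{1/2}$. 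The identical argument on $\tilde\eta$ with $\tilde H_l=H_l$ produces $C_\eta$.

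The only genuinely delicate point to get right is that $d_*(t)$ must be the offset of the actual realization rather than a convex combination of the vertex values $d^k_*$, since the element-wise max does not commute with convex combinations; keeping $d_*(t)$ realization-dependent is exactly what forces the maximal offset to vanish identically (and not merely be non-positive) in property 2, which property 3 in turn relies on.
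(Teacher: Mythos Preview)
Your proof is correct and cleaner than the paper's. The paper works through the vertex representation throughout: writing $\mathbf W(t)=\sum_k[v(t)]_kW^k$ and $d_*(t)=\sum_k[v(t)]_kd^k_*$, it proves property~1 by inserting $\pm d_*(t)$ and then distributing the expression over the vertices, verifies property~2 only at each vertex $W^k$, and for property~3 uses $\tilde d^k_j\le 0$ to obtain the one-sided inequality $\tilde\gamma(t,x)\le\sum_k[v(t)]_k\max_jE^k_jx$ before passing to norms with $C_\gamma=\max_{j,k}\Vert E^k_j\Vert_F$. You instead shift by the realization-level maximum $[d_*(t)]_i=\max_j[d_j(t)]_i$, the direct content of the first clause of Assumption~\ref{assmp:eqlbrm}; this turns property~1 into a pure translation identity, makes property~2 immediate, and---crucially---gives one offset equal to \emph{exactly} zero in each coordinate, which is what yields the two-sided componentwise bound you use for property~3 (the paper's argument only supplies the upper half). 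Your closing caveat is precisely the point that separates the two routes: it is the interchange of $\max$ with convex combinations that your realization-level argument avoids and the paper's vertex-level argument leans on. The vertex formulation is what the LMIs of Theorem~\ref{thm:pol_clf_synth} eventually consume, so the paper's framing is natural for the downstream synthesis, but your argument is the more self-contained proof of the lemma as stated.
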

\begin{proof}
\begin{enumerate}
    \item We have $\mathbf{W}(t)\in\mathcal{W}$ from Assumption \ref{assmp:unc}, therefore \begin{align*}
    &\forall t\geq 0,\
\exists v(t)\in(\mathbb{R}^+)^{n_V}, \mathbf{1}^\top v(t)=1:\\  &\sum_{k=1}^{n_V}[v(t)]_kW^k=\mathbf{W}(t).
\end{align*} 
    Define $d_*(t)=\sum_{k=1}^{n_V}[v(t)]_kd^k_*$ and see that for any $t\geq 0, x\in\mathcal{X}$,
    \begin{align*}
     &[\gamma(t,x)-\eta(t,x)]_i~~~~~~~~~~~~~~~~~~~~~~~~~~~~~~~~~~~~~~~\\
     &~~=\max_{1\leq j\leq \alpha}\ [E_j(t)]_i x(t)+[d_j(t)]_i\\
     &~~~~-\max_{1\leq l\leq \beta}\ [H_l(t)]_i x(t)+[f_l(t)]_i\\
     &~~=-d_*(t)+\max_{1\leq j\leq \alpha}\ [E_j(t)]_i x(t)+[d_j(t)]_i\\
     &~~~~+d_*(t)-\max_{1\leq l\leq \beta}\ [H_l(t)]_ix(t)+[f_l(t)]_i\\
     &~~=\sum_{k=1}^{n_V}[v(t)]_k\big(\max_{1\leq j\leq \alpha}\ [E_j^k]_i x(t)+[d_j^k-d^k_*]_i\\
     &~~~~-\max_{1\leq l\leq \beta}\ [H_l^k]_i x(t)+[f_l^k-d^k_*]_i\big)\\
     &~~=\sum_{k=1}^{n_V}[v(t)]_k\big(\max_{1\leq j\leq \alpha}\ [\tilde{E}_j^k]_i x(t)+[\tilde{d}_j^k]_i\\
     &~~~~-\max_{1\leq l\leq \beta}\ [\tilde{H}_l^k]_i x(t)+[\tilde{f}_l^k]_i\big)\\
     &~~=[\tilde{\gamma}(t,x)-\tilde{\eta}(t,x)]_i,~~i=1,\dots,n.
    \end{align*}
    \item Let $\tilde{j}^i_k=\text{arg}\max_{1\leq j\leq \alpha}[d_j^k]_i$ and $\tilde{l}^i_k=\text{arg}\max_{1\leq l\leq \beta}[f_l^k]_i$. By Assumption \ref{assmp:eqlbrm}, we have $[d^k_{\tilde{j}^i_k}]_i=[f^k_{\tilde{l}^i_k}]_i=[d^k_*]_i$. Therefore, $\tilde{d}^k_j, \tilde{f}^k_j\leq 0$ and $\forall i=1,\dots, n$,
    \begin{align*}
&[\tilde{\gamma}(t,\mathbf{0})]_i=\max_{1\leq j\leq\alpha}[\tilde{d}^j_k]_i=[d^k_{\tilde{j}^i_k}]_i-[d^k_*]_i= 0,\\ &[\tilde{\eta}(t,\mathbf{0})]_i=\max_{1\leq l\leq\beta}[\tilde{f}^j_k]_i=[f^k_{\tilde{l}^i_k}]_i-[d^k_*]_i= 0.
\end{align*}
\item Let $C_\gamma=\max_{\substack{1\leq j\leq\alpha\\1\leq k\leq n_V}}\Vert E^k_j\Vert_F$ (where $\Vert\cdot\Vert_F$ is the Frobenius norm). We have already seen in 2) that $\tilde{d}^k_j\leq 0$ and so we have 
\begin{align*}
    &E^k_jx+\tilde{d}^k_j\leq E^k_jx, \forall j=1,\dots,\alpha\\
\Rightarrow&\sum_{k=1}^{n_V}[v(t)]_k(\max_{1\leq j\leq \alpha}E^k_jx+\tilde{d}^k_j)\leq\sum_{k=1}^{n_V}[v(t)]_k(\max_{1\leq j\leq \alpha}E^k_jx)\\
\Rightarrow&\tilde{\gamma}(t,x)\leq \sum_{k=1}^{n_V}[v(t)]_k(\max_{1\leq j\leq \alpha}E^k_jx)\\
\Rightarrow&\Vert\tilde{\gamma}(t,x)\Vert\leq(\sum_{k=1}^{n_V}[v(t)]_k) C_\gamma\Vert x\Vert=C_\gamma\Vert x\Vert
\end{align*}
By setting $C_\eta=\max_{\substack{1\leq l\leq\beta\\1\leq k\leq n_V}}\Vert H^k_l\Vert_F$, we can similarly show $\Vert\tilde{\eta}(t,x)\Vert\leq C_\eta\Vert x\Vert$.

\end{enumerate}\end{proof}
\begin{rem}\label{rem:DC_redef_const}
If $\gamma(t,\mathbf{0})=\eta(t,\mathbf{0})=\Gamma~ \forall t\geq 0$ for some constant, known $\Gamma\in\mathbb{R}^n$ then $\tilde{\gamma}(t,x)=\gamma(t,x)-\Gamma$ and $\tilde{\eta}(t,x)=\eta(t,x)-\Gamma$.
\end{rem}
\subsection{Control Policy Synthesis}
We consolidate our results to construct a PWQ CLF $V(t,x(t))$ and stabilizing PWA policy $u^*(t,x(t))$ satisfying the properties in Lemma \ref{lem:CLF} for system \eqref{eq:DC} using the technique presented in section \ref{sec:lift} and the redefined dynamics \eqref{eq:DC_redef} from Lemma \ref{lem:DC_redef}. The main idea is to express the convex PWA functions $\tilde{\gamma}(\cdot)$, $\tilde{\eta}(\cdot)$ as the solution to a set of linear inequalities and quadratic equalities in a lifted space (as in \eqref{eq:max_solnset}) and then derive certificates to check if the candidate $V(\cdot)$ and $u^*(\cdot)$ satisfies the properties given in Lemma \ref{lem:CLF}. 

Consider CLF candidates for system \eqref{eq:DC_redef} (or equivalently, \eqref{eq:DC}) of the form 
\begin{align}\label{eq:clf_cand}
V(t,x)=\begin{bmatrix}x\\\tilde{\gamma}(t,x)\\\tilde{\eta}(t,x)
\end{bmatrix}^\top P \begin{bmatrix}x\\\tilde{\gamma}(t,x)\\\tilde{\eta}(t,x)
\end{bmatrix}.
\end{align}
Note that since $\tilde{\gamma}(t,\cdot), \tilde{\eta}(t,\cdot)$ are PWA in $x$, $V(t,\cdot)$ is PWQ in $x$. The polytopic partition and the quadratic functions describing $V(t,\cdot)$ are implicitly defined by $\tilde{\gamma}(t,\cdot)$ and $\tilde{\eta}(t,\cdot)$.

Now we proceed to construct the policy class for our stabilizing policy $u^*(t,x(t)$. Recursively define the following functions
\begin{equation*}
\begin{aligned}
    \tilde{\gamma}_{j}(t,x)=\max\{\tilde{\gamma}_{j-1}(t,x), \tilde{E}_j(t)x+\tilde{d}_j(t)\}~~j=2,\dots,\alpha\\
    \tilde{\eta}_{l}(t,x)=\max\{\tilde{\eta}_{l-1}(t,x),\tilde{H}_l(t)x+\tilde{f}_l(t)\}~~l=2,\dots,\beta
\end{aligned}
\end{equation*}
with $\tilde{\gamma}_1(t,x)=\tilde{E}_1(t)x+\tilde{d}_1(t)$ and $\tilde{\eta}_1(t,x)=\tilde{H}_1(t)x+\tilde{f}_1(t)$ and define
\begin{align}\label{eq:pol_basis}
\chi(t,x)=[1\ x^\top\ \tilde{\gamma}_1(t,x)^\top\ \dots\tilde{\gamma}_\alpha(t,x)^\top\ \tilde{\eta}_1(t,x)^\top\nonumber\\
\dots\tilde{\eta}_\beta(t,x)^\top]^\top.
\end{align} 
Due to the nested maximizations property as noted earlier, we have $\tilde{\gamma}(t,x)=\tilde{\gamma}_\alpha(t,x)$ and $\tilde{\eta}(t,x)=\tilde{\eta}_\beta(t,x)$.
Similar to \eqref{eq:max_solnset}, we have for any $t\geq 0$ that $(x,\tilde{\gamma}(t,x),\tilde\eta(t,x))$ is given as the solution to the following linear inequalities and quadratic equalities in the space $(x,\tilde\gamma_1, \dots, \tilde\gamma_\alpha, \tilde\eta_1,\dots,\tilde\eta_\beta)\in\mathbb{R}^{n(1+\alpha+\beta)}$ (where $x,\tilde\gamma_1, \dots, \tilde\eta_\beta$ are arbitrary).
\begin{equation}
\begin{aligned}\label{eq:dyn_solnset}
    &\tilde\gamma_1=\tilde E_1(t)x+\tilde d_1(t),\\
    &\tilde\gamma_{j}\geq \tilde\gamma_{j-1},\ \tilde\gamma_{j}\geq \tilde E_{j}(t)x+\tilde d_j(t),\\
    &(\tilde\gamma_{j}-\tilde\gamma_{j-1})^\top(\tilde\gamma_j-\tilde E_{j}(t)x-\tilde d_j(t))=0,\ j=2,\dots, \alpha,\\
    &\tilde\eta_1=\tilde H_1(t)x+\tilde f_1(t),\\
    &\tilde\eta_{l}\geq \tilde\eta_{l-1},\ \tilde\eta_{l}\geq \tilde E_{l}(t)x+\tilde f_l(t),\\
    &(\tilde\eta_{l}-\tilde\eta_{l-1})^\top(\tilde\eta_l-\tilde E_{l}(t)x-\tilde f_l(t))=0,\ l=2,\dots, \beta.
\end{aligned}
\end{equation}
For defining our policy class, we make the following assumption on the feedback structure of system \eqref{eq:DC}.
\begin{assumption}\label{assmp:fb_strct}
There exists a known matrix $C\in\mathbb{R}^{p\times (1+n(1+\alpha+\beta))}$ such that $C\chi(t,x)$ is observed. 
\end{assumption}
With this assumption, we consider policy candidates parametrized by $K\in\mathbb{R}^{m\times p}$ of the form 
\begin{align}\label{eq:pol_cand}
u(t,x)=KC\chi(t,x).
\end{align}
Since $\chi(t,\cdot)$ as defined in \eqref{eq:pol_basis} is PWA in $x$, the policy $u(t,\cdot)$ is PWA in $x$ where the time-varying polytopic partition $\sqcup_{i=1}^{n_r(t)}\tilde{\mathcal{X}}_i(t)=\mathcal{X}$ and affine functions $F_i(t)x+g_i$ in \eqref{eq:tvPWA_pol} are defined implicitly. 

The following theorem is our main result that describes LMIs whose solution yields a CLF $V(t,x(t))$ and stabilizing policy $u(t,x(t))$ of the form discussed above. Before stating the theorem, we require some definitions. Let
\begin{equation}
\begin{aligned}\label{eq:aux_dfs}
 x^+=Ax+\tilde{\gamma}_\alpha(t,x)-\tilde\eta_\beta(t,x)+BKC\chi(t,x)\\
 \chi^+(t,x)=[\chi(t,x)^\top \tilde\gamma_1(t+1,x^+)^\top\dots\tilde\gamma_\alpha(t+1,x^+)^\top\\ \tilde\eta_1(t+1,x^+)^\top\
 \dots\tilde\eta_\beta(t+1,x^+)^\top]^\top
\end{aligned}
\end{equation}
and define the following selection matrices:
\begin{itemize}
    \item $S$ such that $S\chi^+(t,x)=\chi(t,x)$,
    \item $S_\lambda$ such that $S_\lambda\chi^+(t,x)=[1\ x^\top\ \tilde\gamma(t,x)^\top\ \tilde\eta(t,x)^\top]^\top$,
    \item $S_x$ such that $S_x\chi^+(t,x)=[1\  x^\top]^\top$,
    \item $S^+$ such that $S^+\chi^+(t,x)=\chi(t+1,x^+)$,
    \item $G_{\gamma 1},G_{\eta 1}$ such that $G_{\gamma 1}\chi(t,x)=I_n$, $G_{\eta 1}\chi(t,x)=I_n$ respectively,
    \item $G_{\gamma j}, G_{\eta l}$ such that $G_{\gamma j}\chi(t,x)=\tilde\gamma_j(t,x)-\tilde\gamma_{j-1}(t,x)$, $G_{\eta l}\chi(t,x)=\tilde\eta_l(t,x)-\tilde\eta_{l-1}(t,x)$ for $j=2,\dots,\alpha$ and $l=2,\dots,\beta$ respectively,
    \item $G_{\gamma j}^k$, $G_{\eta l}^k$ such that $G^k_{\gamma j}\chi(t,x)=\tilde\gamma_j(t,x)-\tilde E_j^kx-\tilde d_j^k$, $G^k_{\eta l}\chi(t,x)=\tilde\eta_l(t,x)-\tilde H_l^kx-\tilde f_l^k$ for $j=1,\dots,\alpha$ and $l=1,\dots,\beta$ respectively.
\end{itemize}
Note that $S^+$ is affine in the unknown gain $K$ from \eqref{eq:aux_dfs}. All the other matrices are constant, known matrices that depend on the problem data.
\begin{theorem}\label{thm:pol_clf_synth}
Fix some $\rho_1\in\mathbb{R}^+, \rho_3\in(0,1)$. Define the ellipsoidal set  $\tilde{\mathcal{U}}=\{u\in\mathbb{R}^m:\ u^\top Q_uu\leq 1\}\subset \mathcal{U}$ where $Q_u^\top=Q_u$, $Q_u\succ 0$. Suppose that there exist real-valued matrices $T, V$, diagonal matrix $J$, non-negative matrices $M_{\gamma j}$, $M_{\eta l}$, $\bar M_{\gamma j}$, $\bar M_{\eta l}$, $\tilde M_{\gamma j}$, $\tilde M_{\eta l}$, $M^+_{\gamma j}$, $M^+_{\eta l}$, diagonal matrices $D_{\gamma l}$, $D_{\eta l}$, $\bar D_{\gamma j}$, $\bar D_{\eta l}$ $\tilde D_{\gamma l}$, $\tilde D_{\eta l}$, $D^+_{\gamma l}$, $D^+_{\eta l}$
for $j=1,\dots, \alpha$ and $l=1,\dots,\beta$, such that for some symmetric matrix $P^*$ and feedback gain $K^*$ we have:
    \small
\begin{align}\label{eq:lyap_pos_Lmi}
&\qquad~~~~~\text{(Positive-definite Lyapunov Function)}\nonumber\\
    &(S_\lambda S)^\top P^*S_\lambda S-S_x^\top\begin{bmatrix}0&\mathbf{0}\\\mathbf{0}&\rho_1 I_n\end{bmatrix}S_x \nonumber\\
    &-S^\top\text{He}(\sum_{j=1}^\alpha G^\top_{\gamma j}( D_{\gamma j}+M_{\gamma j})G_{\gamma j}^k+\sum_{l=1}^\beta G^\top_{\eta l}( D  _
    {\eta l}+M_{\eta l})G_{\eta l}^k)S \succeq 0\nonumber\\ &~~~~~k=1,\dots, n_V\\
    &~~~~~~~[P^*]_{11}=0\label{eq:Pzero}
\end{align}

\begin{align}\label{eq:input_constr_Lmi}
&\qquad~~~~~~~~~~~~~~\text{(Input Constraints)}\nonumber\\
    &(CS)^\top\begin{bmatrix}1&\mathbf{0}\\\mathbf{0}&-Q_u\end{bmatrix}CS\nonumber\\
    &-S^\top\text{He}(\sum_{j=1}^\alpha G^\top_{\gamma j}( \bar{D}_{\gamma j}+\bar{M}_{\gamma j})G_{\gamma j}^k+\sum_{l=1}^\beta G^\top_{\eta l}( \bar{D}_
    {\eta l}+\bar{M}_{\eta l})G_{\eta l}^k)S \succeq 0\nonumber\\ &~~~~~k=1,\dots, n_V
\end{align}

\begin{align}\label{eq:lyap_decr_LMI}
 &\text{(Decreasing Lyapunov Function)}\nonumber\\
&\left[\begin{array}{ccc}
   Z^{k}_1 & V &\multirow{2}{*}{$\mathbf{Z}^q_2$}\\
   \star & 2\text{He}(T)-Y &\\
   \multicolumn{2}{c}{\star} & Z_3 
\end{array}\right]\succeq 0 \nonumber\\
&~~~~ k=1,\dots, n_V, ~q=1,\dots, n_V
\end{align}
\normalsize
where
\small
\begin{align*}
    Z^{k}_1&=(1-\rho_3)S^\top P^* S\\
    &-S^\top\text{He}(\sum_{j=1}^\alpha G^\top_{\gamma j}( \tilde{D}_{\gamma j}+\tilde{M}_{\gamma j})G_{\gamma j}^k+\sum_{l=1}^\beta G^\top_{\eta l}( \tilde{D}_
    {\eta l}+\tilde{M}_{\eta l})G_{\eta l}^k)S,\\
    {Z}^q_2&=\begin{bmatrix}U^q&V\\\mathbf{0}&T^\top\end{bmatrix}, \quad Z_3=\begin{bmatrix}
    J&\mathbf{0}\\\star&2I-J
    \end{bmatrix},\end{align*}
    \begin{align*}
    U^q&=S^{+\top}[S^\top_\lambda\ G^\top_{\gamma 1}\ G_{\gamma 1}^{q\top}\dots\ G^\top_{\gamma \alpha}\ G_{\gamma \alpha}^{q\top}\ G^\top_{\eta 1}\ G_{\eta 1}^{q\top}\dots G^\top_{\eta \beta}\ G_{\eta \beta}^{q\top}],\\
    Y&=\text{blkdiag}(P^*,Y_{\gamma 1},\dots,Y_{\gamma\alpha},Y_{\eta 1},\dots,Y_{\eta\beta}),\\
    Y_{\gamma j}&=\begin{bmatrix}
    \mathbf{0}&D^+_{\gamma j}+M^+_{\gamma j}\\\star &\mathbf{0}
    \end{bmatrix}, Y_{\eta j}=\begin{bmatrix}
    \mathbf{0}&D^+_{\eta j}+M^+_{\eta j}\\\star &\mathbf{0}
    \end{bmatrix}.
\end{align*}
\normalsize

Then the origin of the closed-loop system \eqref{eq:DC} with $u^*(t,x)=K^*C\chi(t,x)$ is exponentially stable and $u^*(t,x)\in\mathcal{U}~\forall x\in\mathcal{X},t\geq0$.
\end{theorem}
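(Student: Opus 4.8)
The plan is to show that the candidate $V(t,x)$ in \eqref{eq:clf_cand} with $P=P^*$, together with the policy $u^*(t,x)=K^*C\chi(t,x)$, satisfies the two hypotheses of Lemma~\ref{lem:CLF} and the input constraint, after which exponential stability follows directly from that lemma. The unifying device is the S-procedure relaxation described after Proposition~\ref{prop:lmbd_graph}: each desired quadratic inequality in the lifted coordinate $\chi(t,x)$ (or $\chi^+(t,x)$) is certified by subtracting multiplier terms built from the graph equalities and inequalities \eqref{eq:dyn_solnset}, which vanish (for the diagonal multipliers $D$) or stay nonnegative (for the entrywise-nonnegative multipliers $M$) when evaluated on the true graph. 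The uncertainty is dispatched by the vertex enumeration: since $\mathbf W(t)=\sum_{k}[v(t)]_k W^k$ with $v(t)$ in the probability simplex (Assumption~\ref{assmp:unc}), a matrix inequality that holds for every vertex index $k=1,\dots,n_V$ may be convex-combined with the weights $[v(t)]_k$; the per-vertex residuals $G^k_{\gamma j}\chi=\tilde\gamma_j-\tilde E_j^k x-\tilde d_j^k$ then average to the realized residuals $\tilde\gamma_j-\tilde E_j(t)x-\tilde d_j(t)$, so the multiplier terms reduce exactly to the realized constraints \eqref{eq:dyn_solnset} that hold on the true graph.

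For property~1 of Lemma~\ref{lem:CLF}, I would left- and right-multiply \eqref{eq:lyap_pos_Lmi} by $\chi^+(t,x)$: the term $(S_\lambda S)^\top P^* S_\lambda S$ produces $V(t,x)$, the term $S_x^\top\,\mathrm{diag}(0,\rho_1 I_n)\,S_x$ produces $\rho_1\|x\|^2$, and the multiplier block vanishes or stays nonnegative on the graph as above, so the convex combination over $k$ yields $V(t,x)\ge\rho_1\|x\|^2$. Condition \eqref{eq:Pzero}, $[P^*]_{11}=0$, forces $V(t,\mathbf 0)=0$ because $\tilde\gamma(t,\mathbf 0)=\tilde\eta(t,\mathbf 0)=\mathbf 0$ by Lemma~\ref{lem:DC_redef}. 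The quadratic upper bound $V(t,x)\le\rho_2\|x\|^2$ for some $\rho_2$ then follows from the linear growth bounds $\|\tilde\gamma(t,x)\|\le C_\gamma\|x\|$ and $\|\tilde\eta(t,x)\|\le C_\eta\|x\|$ of Lemma~\ref{lem:DC_redef}, which bound $\|[1\ x^\top\ \tilde\gamma^\top\ \tilde\eta^\top]^\top\|$ by a multiple of $\|x\|$. The input constraint $u^*(t,x)\in\tilde{\mathcal U}\subseteq\mathcal U$, i.e.\ $u^*(t,x)^\top Q_u u^*(t,x)\le1$, is certified in the same way from \eqref{eq:input_constr_Lmi} with multipliers $\bar D,\bar M$ and the ellipsoidal data $\mathrm{diag}(1,-Q_u)$.

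The heart of the argument, and the main obstacle, is the decrease condition~2, certified by \eqref{eq:lyap_decr_LMI}. The difficulty is threefold. First, the quantity to bound, $\rho_3 V(t,x)-V(t+1,x^+)$ with $x^+,\chi^+$ as in \eqref{eq:aux_dfs}, is bilinear in $(P^*,K^*)$, since both $x^+$ and the next-step lift $\chi(t+1,x^+)$ depend on $K^*$. This is handled by a dilation/slack-variable construction: $S^+$ is affine in $K$, and the middle block $2\,\text{He}(T)-Y$ carries both the next-step Lyapunov matrix $P^*$ (the leading block of $Y$) and the next-step graph multipliers $D^+_{\gamma j},M^+_{\gamma j},D^+_{\eta l},M^+_{\eta l}$ (the blocks $Y_{\gamma j},Y_{\eta l}$); a Schur complement of this block against the couplings $V$ and $\mathbf Z_2^q$, which contain $U^q=S^{+\top}[\dots]$, recovers the genuine decrease form while keeping the synthesis LMI linear in $K$. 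Second, the next-step coordinates $\tilde\gamma_j(t+1,x^+),\tilde\eta_l(t+1,x^+)$ obey the constraints \eqref{eq:dyn_solnset} at time $t+1$ driven by an \emph{independent} i.i.d.\ realization $\mathbf W(t+1)$ (Assumption~\ref{assmp:unc}); this is why \eqref{eq:lyap_decr_LMI} carries two vertex counters, $k$ for the time-$t$ residuals $G^k_{\gamma j},G^k_{\eta l}$ in $Z_1^k$ and $q$ for the time-$(t+1)$ residuals $G^q_{\gamma j},G^q_{\eta l}$ entering $U^q$ and the block $Z_3$ with multipliers $J,\,2I-J$. I would thus convex-combine the inequality over $q$ with the simplex weights of $\mathbf W(t+1)$ and over $k$ with those of $\mathbf W(t)$.

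Finally, I would left- and right-multiply the slack-eliminated inequality by $\chi^+(t,x)$. Collecting the diagonal block $Z_1^k$, the coupling $V$, and the blocks $\mathbf Z_2^q,Z_3$ reproduces $\rho_3 V(t,x)-V(t+1,x^+)$ up to multiplier terms built from the time-$t$ and time-$(t+1)$ graph constraints. Invoking Proposition~\ref{prop:lmbd_graph} for both the current graph $(x,\tilde\gamma,\tilde\eta)$ and the next-step graph $(x^+,\tilde\gamma(t+1,x^+),\tilde\eta(t+1,x^+))$, each such term vanishes or is nonnegative on the true graphs, so the convex-combined inequality collapses to $\rho_3 V(t,x)-V(t+1,x^+)\ge0$ for every admissible pair $\mathbf W(t),\mathbf W(t+1)\in\mathcal W$. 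This is property~2 of Lemma~\ref{lem:CLF}; together with property~1 and $u^*(t,x)\in\mathcal U$ it yields, via Lemma~\ref{lem:CLF}, exponential stability of the origin of \eqref{eq:DC} under $u^*(t,x)=K^*C\chi(t,x)$. I expect this decrease LMI to be the only genuinely delicate step: the bookkeeping of the two independent uncertainties through the double vertex enumeration and the verification that the dilation block reproduces the bilinear decrease after Schur complementation require care, whereas the remaining bounds are routine consequences of Proposition~\ref{prop:lmbd_graph} and Lemma~\ref{lem:DC_redef}.
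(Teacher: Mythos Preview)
Your proposal follows essentially the same route as the paper: multiply each LMI by $\chi^+(t,x)$ on both sides, average over the vertex indices $k$ (and $q$ for the decrease LMI) using Assumption~\ref{assmp:unc} so that the per-vertex residuals $G^k_{\gamma j},G^k_{\eta l}$ collapse to the realized residuals, then invoke Proposition~\ref{prop:lmbd_graph} to discard the multiplier terms, and finally appeal to Lemma~\ref{lem:DC_redef} for the quadratic upper bound and Lemma~\ref{lem:CLF} to conclude.

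The one place where you are too coarse is the slack elimination in \eqref{eq:lyap_decr_LMI}. You describe it as ``a Schur complement of this block against the couplings $V$ and $\mathbf Z_2^q$,'' and you interpret $Z_3=\mathrm{blkdiag}(J,2I-J)$ as carrying time-$(t{+}1)$ graph multipliers. In the paper $J$ is not a graph multiplier at all; it is a pure slack. The elimination proceeds in three distinct steps: first the scalar AM--GM bound $J^{-1}\succeq 2I-J$ is substituted into $Z_3$, then a Schur complement together with Young's inequality removes $J$, and finally the reciprocal-projection lemma (not a plain Schur complement) is used to eliminate $T$ and $V$ while preserving the bilinear term $S^{+\top}YS^{+}$ that encodes $V(t+1,x^+)$ and the next-step multipliers $D^+,M^+$. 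A single Schur complement does not suffice here because the decrease term $-S^{+\top}P^*S^{+}$ appears with a \emph{negative} sign and $S^+$ carries $K^*$; the reciprocal-projection trick is precisely what decouples $P^*$ from $K^*$ in that block. Once you make this elimination sequence explicit, the remainder of your sketch (double convex combination over $k,q$, multiplication by $\chi^+(t,x)$, and Proposition~\ref{prop:lmbd_graph} applied to both the time-$t$ and time-$(t{+}1)$ graphs) matches the paper exactly.
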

\begin{proof}
Multiplying the $k$th inequality in \eqref{eq:lyap_pos_Lmi} by $\chi^+(t,x)$ on both sides for $x\in\mathcal{X}, t\geq 0$, gives
\begin{align*}
    V(t,x)-\rho_1\Vert x\Vert^2&\geq \sum_{j=1}^\alpha\chi^\top(t,x)G^\top_{\gamma j}( D_{\gamma j}+M_{\gamma j})G_{\gamma j}^k\chi(t,x)\\
    &+\sum_{l=1}^\beta\chi^\top(t,x)G^\top_{\eta l}( D_{\eta l}+M_{\eta l})G_{\eta l}^k\chi(t,x). 
\end{align*}
From Assumption \ref{assmp:unc}, we know that $\mathbf{W}(t)\in\mathcal{W}\Rightarrow \forall t\geq 0, \exists v(t)\in(\mathbb{R}^+)^{n_V}, \mathbf{1}^\top v(t)=1: \sum_{k=1}^{n_V}[v(t)]_kW^k=\mathbf{W}(t)$. Also notice that the matrices $G^k_{\gamma j},G^k_{\eta j}$ are linear in $W^k$. Let $\bar{G}_{\gamma j}(t)$, $\bar{G}_{\eta l}(t)$ be matrices such that $\tilde\gamma_j(t,x)-\tilde{E}_j(t)x-\tilde{d}_j(t)=\bar{G}_{\gamma j}(t)\chi(t,x)$,  $\tilde{\eta}_l(t,x)-\tilde{H}_l(t)x-\tilde{f}_l(t)=\bar{G}_{\eta l}(t)\chi(t,x)$ for $j=1,\dots,\alpha$ and $l=1,\dots,\beta$ respectively. Then we have $\bar{G}_{\gamma j}=\sum_{k=1}^{n_V}[v(t)]_kG^k_{\gamma j}$ and $\bar{G}_{\eta l}=\sum_{k=1}^{n_V}[v(t)]_kG^k_{\eta l}$. Thus multiplying the $k$th inequality above by $[v(t)]_k$ and adding all gives
\begin{align*}
    &V(t,x)-\rho_1\Vert x\Vert^2\geq\\
    &\sum_{j=1}^\alpha \chi(t,x)^\top G^\top_{\gamma j}(D_{\gamma j}+M_{\gamma j})\bar{G}_{\gamma j}(t)\chi(t,x)\\
    &+\sum_{l=1}^\beta  \chi(t,x)^\top G^\top_{\eta l}(D_{\eta l}+M_{\eta l})\bar{G}_{\eta l}(t)\chi(t,x)
\end{align*}
  From Proposition \ref{prop:lmbd_graph}, we have that the two terms on the right are positive and so
  $$\rho_1\Vert x\Vert^2\leq V(t,x) ~~\forall x\in\mathcal{X}, t\geq 0. $$
  Similarly, \eqref{eq:input_constr_Lmi} gives for any $x\in\mathcal{X}, t\geq 0$
  \begin{align*}
      &1-u^*(t,x)^\top Q_uu^*(t,x)\geq\\
      &\sum_{j=1}^\alpha \chi(t,x)^\top G^\top_{\gamma j}(D_{\gamma j}+M_{\gamma j})\bar{G}_{\gamma j}(t)\chi(t,x)\\
    &+\sum_{l=1}^\beta  \chi(t,x)^\top G^\top_{\eta l}(D_{\eta l}+M_{\eta l})\bar{G}_{\eta l}(t)\chi(t,x)
  \end{align*}
  for any $x\in\mathcal{X}, t\geq 0$. From Proposition \ref{prop:lmbd_graph}, we have that the two terms on the right are positive and so
  $$ u^*(t,x)\in\tilde{\mathcal{U}}\subset\mathcal{U}~\forall x\in\mathcal{X}, t\geq 0.$$
  
  From lemma~\ref{lem:DC_redef}, we have $\Vert\tilde\gamma(t,x)\Vert \leq C_\gamma\Vert x\Vert$, $
  \Vert\tilde\eta(t,x)\Vert\leq C_\eta\Vert x\Vert$ for some $C_\gamma, C_\eta\in\mathbb{R}^+$. Using \eqref{eq:Pzero} and setting $\rho_2=\lambda_{max}(P^*)(1+C_\gamma^2+C_\eta^2)$, we get $$V(t,x)\leq \rho_2\Vert x\Vert^2~~\forall x\in\mathcal{X}, t\geq 0. $$   
  Observe that $Z^{k}_1, Z^q_2$ in inequality \eqref{eq:lyap_decr_LMI} are linear in $G^k_{\gamma j}, G^k_{\eta l}$ and $G^q_{\gamma j}, G^q_{\eta l}$ respectively. Define $Z_1(t)$ by replacing $G^k_{\gamma j}, G^k_{\eta l}$ with $\bar{G}_{\gamma j}(t)$, $\bar{G}_{\eta l}(t)$ in the definition of $Z^{k}_1$ and see that $Z_1(t)=\sum_{k=1}^{n_V}[v(t)]_kZ^{k}_1$. Similarly, define $Z_2(t+1)$ by replacing $G^q_{\gamma j}, G^q_{\eta l}$ with $\bar{G}_{\gamma j}(t+1)$, $\bar{G}_{\eta l}(t+1)$ in the definition of $Z^q_2$ and see that  $Z_2(t+1)=\sum_{q=1}^{n_V}[v(t+1)]_qZ^q_2$. For a fixed $q$, multiplying the $k$th inequality in \eqref{eq:lyap_decr_LMI} by $[v(t)]_k$ and adding the $n_V$ inequalities gives 
  $$\left[\begin{array}{ccc}
   Z_1(t) & V &\multirow{2}{*}{$\mathbf{Z}^q_2$}\\
   \star & 2\text{He}(T)-Y &\\
   \multicolumn{2}{c}{\star} & Z_3 
\end{array}\right]\succeq0.$$
Again, multiplying the $q$th inequality above with $[v(t+1)]_q$ and adding the $n_V$ resulting inequalities gives
 $$\left[\begin{array}{ccc}
   Z_1(t) & V &\multirow{2}{*}{$Z_2(t+1)$}\\
   \star & 2\text{He}(T)-Y &\\
   \multicolumn{2}{c}{\star} & Z_3 
\end{array}\right]\succeq 0.$$
Now we proceed to eliminate the variables $J, T$ and $V$ as follows. Using the AM-GM inequality, we have $J^{-1}\succeq2I-J$ which we replace in $Z_3$. Then, we use Schur's complement and Young's inequality to eliminate $J$. Variables $T$ and $V$ are eliminated by employing the reciprocal-projection lemma (\cite[Section 2.5.4, 1]{caverly2019lmi}). Multiplying both sides of the resulting inequality with $\chi^+(t,x)$ for $x\in\mathcal{X}$, gives
\begin{align*}
    &-(V(t+1,x^+)-\rho_3V(t,x))\geq\\
    &\sum_{j=1}^\alpha \chi(t,x)^\top G^\top_{\gamma j}(\tilde D_{\gamma j}+\tilde{M}_{\gamma j})\bar{G}_{\gamma j}(t)\chi(t,x)+\\
    &\sum_{l=1}^\beta  \chi(t,x)^\top G^\top_{\eta l}(\tilde D_{\eta l}+\tilde M_{\eta l})\bar{G}_{\eta l}(t)\chi(t,x)+\\
    &\sum_{j=1}^\alpha \chi(t+1,x^+)^\top G^\top_{\gamma j}(D^+_{\gamma j}+M^+_{\gamma j})\bar{G}_{\gamma j}(t+1)\chi(t+1,x^+)+\\
    &\sum_{l=1}^\beta  \chi(t+1,x^+)^\top G^\top_{\eta l}(D^+_{\eta l}+M^+_{\eta l})\bar{G}_{\eta l}(t+1)\chi(t+1,x^+).
\end{align*}
From Proposition \ref{prop:lmbd_graph}, we have that the four terms on the right are positive and so
  $$V(t+1,x^+)\leq \rho_3 V(t,x) ~~\forall x\in\mathcal{X}, t\geq 0.$$
  Thus by Lemma \ref{lem:CLF}, the origin of the closed-loop system \eqref{eq:DC} with $u^*(t,x)=K^*C\chi(t,x)$ is exponentially stable.
  \end{proof}
\begin{rem}
Note that although \eqref{eq:lyap_pos_Lmi},\eqref{eq:Pzero} make $V(t,x)$ positive-definite on $\mathbb{R}^n$, the matrix $P^*\in\mathbb{R}^{(1+3n)\times(1+3n)}$ can be indefinite.
\end{rem}
\begin{rem}
The LMI \eqref{eq:lyap_decr_LMI} can be conservative in practice. Alternatively, it can be replaced by the following BMI.
\begin{align}\label{eq:lyap_decr_bmi}
 &\text{(Decreasing Lyapunov Function)}\nonumber\\
&\begin{bmatrix}
   \mathbf{Z}^k_1 & \mathbf{Z}^q_2\\\star & \mathbf{Z}_3 
\end{bmatrix}\succeq 0 \nonumber\\
&~~~~ k=1,\dots, n_V, ~q=1,\dots, n_V
\end{align}
\normalsize
where
\small
\begin{align*}
    \mathbf{Z}^k_1&=(1-\rho_3)S^\top P^* S+\tilde{r}S_x^\top\begin{bmatrix}-1&\mathbf{0}\\\mathbf{0}& Q_x\end{bmatrix}S_x-UV^\top-VU^\top\\
    &-S^\top\text{He}(\sum_{j=1}^\alpha G^\top_{\gamma j}( \tilde{D}_{\gamma j}+\tilde{M}_{\gamma j})G_{\gamma j}^k+\sum_{l=1}^\beta G^\top_{\eta l}( \tilde{D}_
    {\eta l}+\tilde{M}_{\eta l})G_{\eta l}^k)S,\\
    \mathbf{Z}^q_2&=V-U^qT, \quad \mathbf{Z}_3= T+T^\top-Y,\end{align*}
    \begin{align*}
    U^q&=S^{+\top}[S^\top_\lambda\ G^\top_{\gamma 1}\ G_{\gamma 1}^{q\top}\dots\ G^\top_{\gamma \alpha}\ G_{\gamma \alpha}^{q\top}\ G^\top_{\eta 1}\ G_{\eta 1}^{q\top}\dots G^\top_{\eta \beta}\ G_{\eta \beta}^{q\top}],\\
    Y&=\text{blkdiag}(P^*,Y_{\gamma 1},\dots,Y_{\gamma\alpha},Y_{\eta 1},\dots,Y_{\eta\beta}),\\
    Y_{\gamma j}&=\begin{bmatrix}
    \mathbf{0}&D^+_{\gamma j}+M^+_{\gamma j}\\\star &\mathbf{0}
    \end{bmatrix}, Y_{\eta j}=\begin{bmatrix}
    \mathbf{0}&D^+_{\eta j}+M^+_{\eta j}\\\star &\mathbf{0}
    \end{bmatrix}.
\end{align*}
\normalsize 
\end{rem}
\section{Numerical Examples}\label{sec:numerics}
In this section we illustrate the proposed control synthesis procedure on two examples: A) an inverted pendulum against a soft wall and B) the human-robot collaboration scenario from section~\ref{sec:PD}. For each example, we obtain the DC decomposition, verify our assumptions and synthesize feedback policies which are demonstrated via simulations.

\subsection{Inverted pendulum against Wall}\label{ssec:IPSW}
\begin{figure}[h]
    \centering
    \includegraphics[width=0.45\columnwidth]{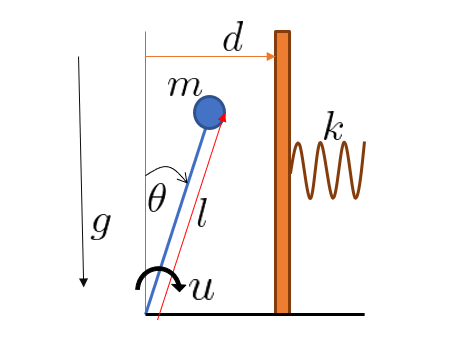}
    \caption{Inverted pendulum with soft wall.}
    \label{fig:IPSW}
\end{figure}
\subsubsection{System Model}
Consider the inverted pendulum of mass $m$ and length $l$ in Figure~\ref{fig:IPSW}. The state of the system  at time $t$, $x(t)=[\theta(t)\ \dot\theta(t)]^\top$, is described by the deviation $\theta$ of the pendulum from the vertical, and its angular velocity $\dot\theta$. At distance $d(t)$ from the vertical, there is a soft wall modeled as a spring with spring constant $k(t)$. Both the spring constant $k(t)$ and distance $d(t)$ are uncertain and time-varying. The torques acting on the pendulum are $mgl\sin\theta$ (due to gravity), $u(t)$ (control torque) and the moment due to the spring given by $\lambda(t,x)=l\cos\theta\max\{0,k(t)(l\sin\theta-d(t))\}$ in the counter-clockwise direction. Assuming small deviations ($\sin\theta\approx\theta$) and using forward-Euler discretization, we can write the system dynamics as the PWA model
\begin{equation}\label{eq:IPSW}
\begin{aligned}
    x(t+1)=&\underbrace{\begin{bmatrix}
    1&dt\\dt\frac{g}{l}&1
    \end{bmatrix}}_{A}x(t)+\underbrace{\begin{bmatrix}0\\\frac{dt}{ml^2}\end{bmatrix}}_{B}u(t)\\
    &~~~-\underbrace{\begin{bmatrix}
    0\\\frac{dt}{ml}
    \end{bmatrix}\max\{0,k(t)l\theta(t)-k(t)d(t)\}}_{\eta(t,x(t))}.
\end{aligned}
\end{equation}
We assume that the uncertain parameters $\mathbf{W}(t)=[k(t)\ k(t)d(t)]$ lie in some convex polytope $\mathcal{W}=\text{conv}\{[k^i\ k_d^i]\}_{i=1}^{n_V}$ as in Assumption~\ref{assmp:unc}. We can also assume $k^i,k_d^i\geq0 ~\forall i=1,\dots, n_V$ to represent non-negative spring constants and distances. Thus $k(t), k(t)d(t)\geq 0$ and so we have $\eta(t,\mathbf{0})=\max\{0,-k(t)d(t)\}\begin{bmatrix}
    0&\frac{dt}{ml}
    \end{bmatrix}^\top=\mathbf{0}=\gamma(t,\mathbf{0})$ in Assumption~\ref{assmp:eqlbrm}. From Remark \ref{rem:DC_redef_const}, the redefined DC decomposition is simply $\tilde{\eta}(t,x)=\eta(t,x)$. 

For policy synthesis, we construct $\chi(t,x)=[1\ x^\top\ \eta_1(t,x)^\top \eta_2(t,x)^\top]^\top$ where $\eta_1(t,x)=\mathbf{0}$ and $\eta_2(t,x)=\max\{\eta_1(t,x),[0\  \frac{dtk(t)}{ml}(l\theta-d(t))]^\top\}$. For the observation model, we assume that the full state $x(t)$ is observable and that we can also observe the contact torque $\lambda(t,x)$. Thus, the matrix $C\in\mathbb{R}^{4\times7}$ in Assumption \ref{assmp:fb_strct} is given by
$C=
\begin{bmatrix}
[I_7]^\top_1&[I_7]^\top_2&[I_7]^\top_3&[I_7]^\top_7
\end{bmatrix}^\top.
$
We now proceed to synthesize a control policy using theorem \ref{thm:pol_clf_synth} for a specific realization of this system.
\subsubsection{Simulations}
The various parameters in \eqref{eq:IPSW} are given as follows (in SI units): $dt=0.01$, $m=1 $, $l=2$, $g=9.8$, $\mathcal{W}=\text{conv}\{[10\ 2], [10\ 2.5],[5\ 1], [5\ 1.25]\}$. The state and input constraints are given by $\mathcal{X}=[-0.5,0.5]\times[-2,1]$, $\mathcal{U}=[-200,200]$. 

We use YALMIP with SDPT3 to find a feasible solution to the inequalities of theorem \ref{thm:pol_clf_synth}. The synthesized policy is tested on three different initial conditions $x_0=\{[0.18\ 0.8]^\top , [0.1\ 1]^\top, [0.22\ 0]^\top\}$ 
\begin{figure}[h]
    \centering
    \includegraphics[width=\columnwidth]{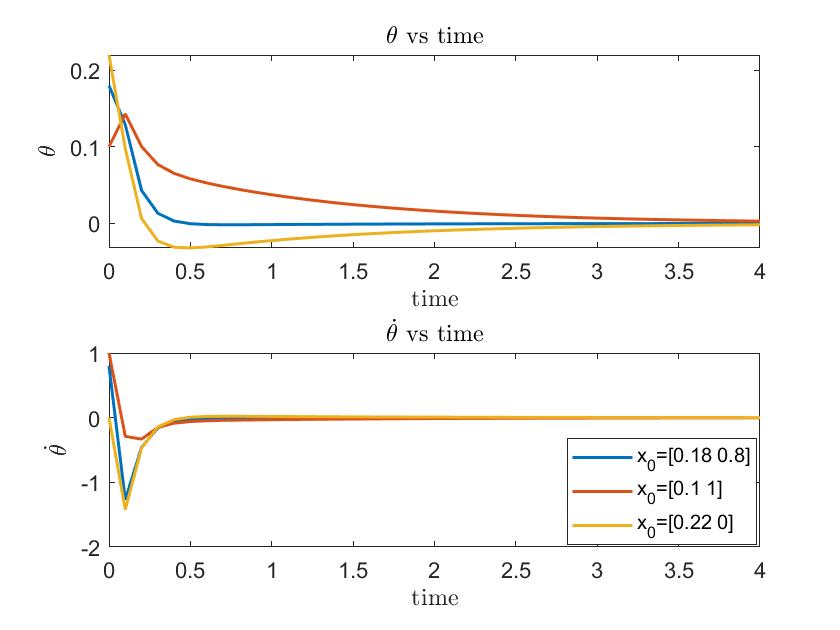}
    \caption{Deviation of the pendulum from the vertical and angular velocity $\theta(t)$, $\dot\theta(t)$ vs. time $t$ for three different initial conditions for system \eqref{eq:IPSW} in closed-loop with the policy $u^*(t,x)$ derived from theorem \ref{thm:pol_clf_synth}.}
    \label{fig:IPSW_states}
\end{figure}
\begin{figure}[h]
    \centering
    \includegraphics[width=\columnwidth]{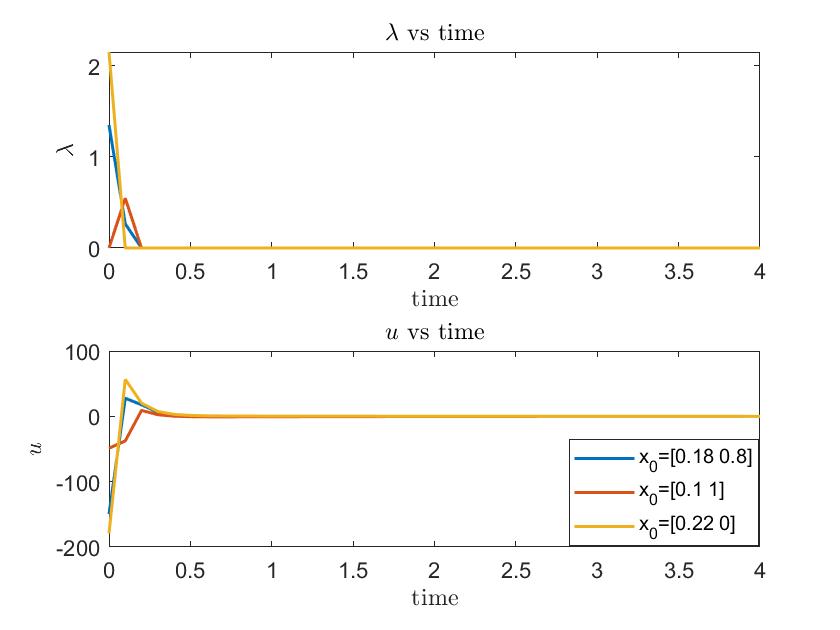}
    \caption{Contact torque $\lambda(t,x(t))$ and control input $u^*(t,x(t))$ vs. time $t$ for three different initial conditions for system \eqref{eq:IPSW} in closed-loop with the policy $u^*(t,x)$ derived from theorem \ref{thm:pol_clf_synth}.}
    \label{fig:IPSW_ulmbd}
\end{figure}
\subsubsection{Discussion}
 Figures~\ref{fig:IPSW_states},\ref{fig:IPSW_ulmbd} show that the system trajectories converge to the origin for all three initial conditions while satisfying state and input constraints. The uncertainty description tells us that $d(t)\in[0.2,0.25]$, or $\dfrac{d(t)}{l}\in[0.1,0.125]$. So the pendulum starts close to the wall in the considered initial conditions, which is confirmed by the first plot in Figure~\ref{fig:IPSW_ulmbd}. The first and last trajectories start with the pendulum in contact with the wall whereas in the second trajectory, the pendulum bounces off the wall before converging to the origin. The control signal in the second plot of Figure~\ref{fig:IPSW_ulmbd} has kinks when contact is made or broken.
\subsection{Human-Robot Collaboration}
\subsubsection{System Model} 
\begin{figure}[h]
    \centering
    \includegraphics[width=0.7\columnwidth]{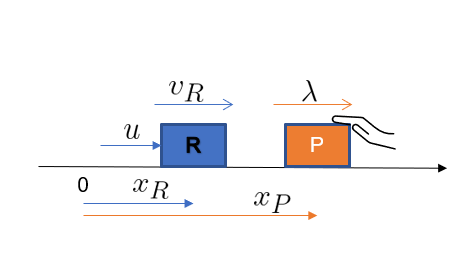}
    \caption{Robot cooperating with human to transport a payload.}
    \label{fig:HR}
\end{figure}
Suppose we have a collaborative setup where a robot is transporting a payload with assistance from a human as illustrated in figure~\ref{fig:HR}. Denote the position and velocity of the robot by $x_R$ and $v_R$ respectively. The velocity $v_R$ is controlled by $u$ via first order discrete-time dynamics. Denote the position of the payload as $x_P$ and let $x(t)=[x_R(t)\ v_R(t)\ x_P(t)]^\top$ describe the state of the system at time $t$. The velocity of the payload is determined by the human's policy as $\lambda(t,x(t))$ given by
\begin{align*}
    \lambda(t,x(t))=&\begin{cases} 
    K(t)(x_R(t)-x_P(t)) & c(t)x_P(t)>x_R(t)\\
    K(t)(c(t)-1)x_P(t)) & c(t)x_P(t)\leq x_R(t)\\
    \end{cases}
\end{align*}
where $c(t)\in(0,1], K(t)\geq 0$ are uncertain parameters of the human's policy. The contact model between the robot and the payload is given by that of inelastic collision. The model of the system is thus given by
\small
\begin{align}\label{eq:HR_eg}
    \begin{bmatrix}x_R(t+1)\\v_R(t+1)\\x_P(t+1)\end{bmatrix}=\begin{bmatrix}
    x_R(t)+dt\cdot v_R(t)\\
    v_R(t)+dt\cdot u(t)\\
    \max\{x_R(t)+dt\cdot v_R(t), x_P(t)+dt\cdot \lambda(t,x(t))\}
    \end{bmatrix}
\end{align}
\normalsize
where $dt$ is the discretization time-step. Observe that because $K(t), c(t)$ are uncertain and time-varying, the polytopic partition of the PWA system \eqref{eq:HR_eg} is uncertain and time-varying too. Also notice that the set $\mathcal{X}_I=\{x\in\mathbb{R}^3: x_R\leq x_P\}$ is invariant for the dynamics \eqref{eq:HR_eg} and we assume that $x(0)\in\mathcal{X}_I$. 
When $x_p(t)\geq0$, the human drives the payload towards the origin if it is close to the robot ($c(t)x_P(t)\leq x_R(t)$) and towards the robot otherwise. When $x_P(t)<0$, the payload is always pushed to the left (towards $-\infty$). 
Now we derive the DC decomposition of the system  model \eqref{eq:HR_eg}. The velocity that the human imparts to the payload, $\lambda(t,x)$, can be written as
\begin{align*}
 \lambda(t,x)=&-\max\{-K(t)(x_R-x_P),-K(t)(c(t)-1)x_P\}.
\end{align*}
Using this expression and the fact that  $\max\{a,b+c\}=\max\{a-b,c\}+b$, the system \eqref{eq:HR_eg} can be written in the form \eqref{eq:DC} as
\begin{equation}\label{eq:HR_DC}
\begin{aligned}
    x(t+1)=&Ax(t)+Bu(t)+\gamma(t,x(t))-\eta(t,x(t))
\end{aligned}
\end{equation}
where $[\gamma(t,x)]_i=[\eta(t,x)]_i=0$ for $i=1,2$ and $[\gamma(t,x)]_3=\max\{x_R+dt(v_R-\lambda(t,x)),x_P\}=\max\{x_R+dt(v_R-K(t)(x_R-x_P)),x_R+dt(v_R-K(t)(c(t)-1)x_P),x_P\}$, $[\eta(t,x)]_3=-dt\lambda(t,x)$. 

We assume that the uncertain parameters $\mathbf{W}(t)=[K(t)\ K(t)c(t)]$ lie in some convex polytope $\mathcal{W}=\text{conv}\{[K^i\ K_C^i]\}_{i=1}^{n_V}$ as in Assumption~\ref{assmp:unc}. We also assume that $K^i, K_c^i\geq 0, K_c^i<K^i~\forall i=1,\dots, n_V$ and so $K(t)\geq 0, c(t)\in[0,1)$. For any $\mathbf{W}(t)\in\mathcal{W}$, we have $\gamma(t,\mathbf{0})=\eta(t,\mathbf{0})=\mathbf{0}$. So from Remark \ref{rem:DC_redef_const}, the redefined DC decomposition is simply $\tilde{\gamma}(t,x)=\gamma(t,x)$, $\tilde{\eta}(t,x)=\eta(t,x)$. 

For policy synthesis, we construct $\chi(t,x)=[1\ x^\top\ \gamma_1(t,x)^\top\ \gamma_2(t,x)^\top\ \gamma_3(t,x)^\top\eta_1(t,x)^\top \eta_2(t,x)^\top]^\top$ where $\gamma_1(t,x)=[0\ 0\ x_R+dt(v_R-K(t)(x_R-x_P))]^\top$, $\gamma_2(t,x)=\max\{\gamma_1(t,x),[0\ 0\ x_R+dt(v_R-K(t)(c(t)-1)x_P)]^\top\}$, $\gamma_3(t,x)=\max\{\gamma_2(t,x),[0\ 0\ x_P]^\top\}$,  $\eta_1(t,x)=[0\ 0\ -dtK(t)(x_R-x_P)]^\top$ and $\eta_2(t,x)=\max\{\eta_1(t,x),[0\ 0\ -dtK(t)(c(t)-1)x_P]^\top\}$. For the observation model, we assume that the full state $x(t)$ is observable and that we can also observe the velocity imparted by the human, $\lambda(t,x)$. Thus, the matrix $C\in\mathbb{R}^{6\times19}$ in Assumption \ref{assmp:fb_strct} is given by
$C=
\begin{bmatrix}
[I_{19}]^\top_1&[I_{19}]^\top_2&[I_{19}]^\top_3&[I_{19}]^\top_4&[I_{19}]^\top_{13}&[I_{19}]^\top_{19}
\end{bmatrix}^\top
$ where the last two rows correspond to $[\gamma_3(t,x)]_3$ and $[\eta_3(t,x)]_3$.
We now proceed to synthesize a control policy using theorem \ref{thm:pol_clf_synth} for a specific realization of this system.
\subsubsection{Simulations}
\begin{figure*}
    \centering
    \includegraphics[width=\textwidth]{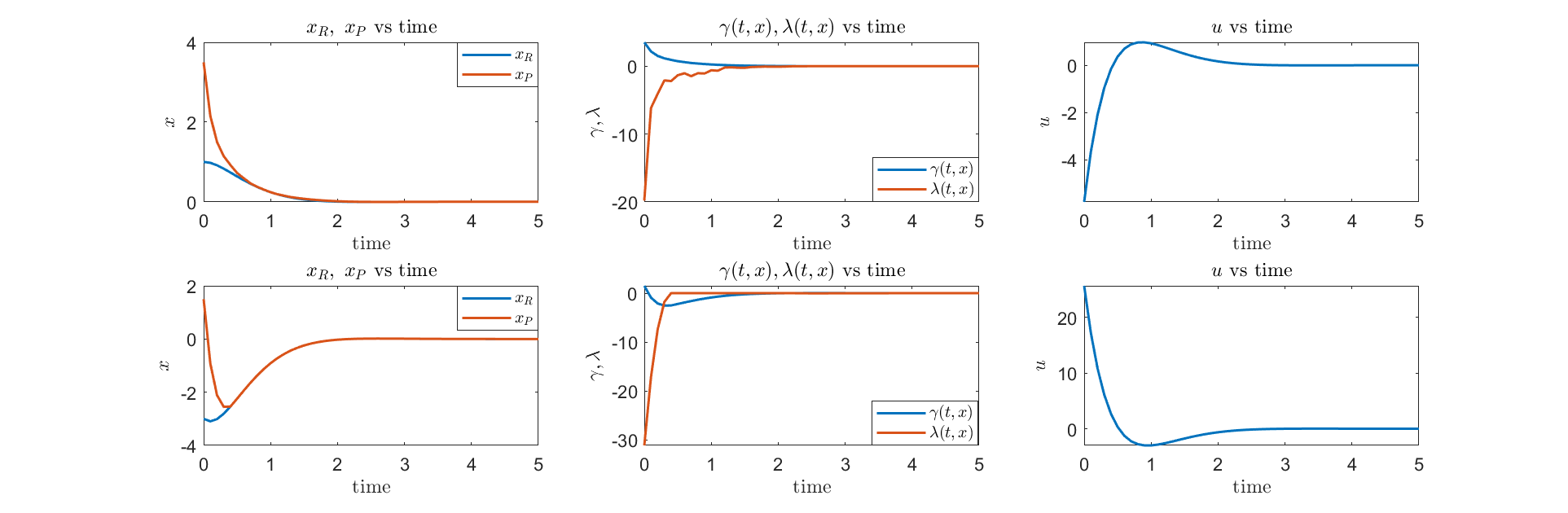}
    \caption{Closed-loop system behaviour of \eqref{eq:HR_eg} for under policy $u^*(t,x)$ derived from theorem \ref{thm:pol_clf_synth}. The plots on the first row correspond to the initial condition $x_0=[1\ 0\ 3.5]^\top$ and the plots on the second row correspond to the initial condition $x_0=[-3\ -2\ 1.5]^\top$.}
    \label{fig:HR_plots}
\end{figure*}
The various parameters in \eqref{eq:HR_DC} are given as follows (in SI units): $dt=0.01$, $\mathcal{W}=\text{conv}\{[10\ 6], [5\ 3],[10\ 8], [5\ 4]\}$.

We use YALMIP with SDPT3 to find a feasible solution to the inequalities of Theorem \ref{thm:pol_clf_synth}. The synthesized policy is tested on two different initial conditions $x_0=\{[1\ 0\ 3.5]^\top , [-3\ -2\ 1.5]^\top\}$ in $\mathcal{X}_I$. The two rows of plots in Figure~\ref{fig:HR_plots} correspond to simulations with the first and second initial conditions respectively. 

\subsubsection{Discussion}
 Figure~\ref{fig:HR_plots} shows that the system trajectories converge to the origin for both initial conditions. Since $K(t)\geq 0$, $c(t)\in[0,1)$ and $x(t)\in\mathcal{X}_I~\forall t\geq 0$, we see that the velocity imparted by the human $\lambda(t,x(t))$ is always negative. In the first row of plots for initial condition $x_0=[1\ 0\ 3.5]^\top$, we see that the robot is driven to the origin first while the payload follows behind. For the second initial condition, the robot starts at $x_R=-3$. The payload is pushed until it makes contact with the robot and then the robot pushes it to the origin with no further action from the human ($\lambda(t,x(t))=0$). 
 \section{Conclusion}
In this work, we analysed and designed feedback controllers for discrete-time Piecewise-Affine (PWA) systems with uncertainty in both the affine dynamics and the polytopic partition. Using the Difference-of-Convex (DC) decomposition of continuous PWA systems and the lifting technique discussed in section~\ref{sec:lift}, we provided LMI tests to synthesize quadratic Lyapunov functions and stabilizing affine policies in a higher dimensional space. When projected back to the state space, these resulted in time-varying PWQ Lyapunov functions and time-varying PWA feedback policies. We demonstrated our technique on two examples- an inverted pendulum against a soft wall and a human-robot collaboration task to transport a payload.
\bibliographystyle{ieeetr}
\bibliography{root.bib}

\begin{thebibliography}{10}

\bibitem{sontag1981nonlinear}
E.~Sontag, ``Nonlinear regulation: The piecewise linear approach,'' {\em IEEE
  Transactions on automatic control}, vol.~26, no.~2, pp.~346--358, 1981.

\bibitem{heemels2001equivalence}
W.~P. Heemels, B.~De~Schutter, and A.~Bemporad, ``Equivalence of hybrid
  dynamical models,'' {\em Automatica}, vol.~37, no.~7, pp.~1085--1091, 2001.

\bibitem{hempel2013every}
A.~B. Hempel, P.~J. Goulart, and J.~Lygeros, ``Every continuous piecewise
  affine function can be obtained by solving a parametric linear program,'' in
  {\em 2013 European Control Conference (ECC)}, pp.~2657--2662, IEEE, 2013.

\bibitem{ferrari2002analysis}
G.~Ferrari-Trecate, F.~A. Cuzzola, D.~Mignone, and M.~Morari, ``Analysis of
  discrete-time piecewise affine and hybrid systems,'' {\em Automatica},
  vol.~38, no.~12, pp.~2139--2146, 2002.

\bibitem{mignone2000stability}
D.~Mignone, G.~Ferrari-Trecate, and M.~Morari, ``Stability and stabilization of
  piecewise affine and hybrid systems: An lmi approach,'' in {\em Proceedings
  of the 39th IEEE Conference on Decision and Control (Cat. No. 00CH37187)},
  vol.~1, pp.~504--509, IEEE, 2000.

\bibitem{hovd2013relaxing}
M.~Hovd and S.~Olaru, ``Relaxing pwq lyapunov stability criteria for pwa
  systems,'' {\em Automatica}, vol.~49, no.~2, pp.~667--670, 2013.

\bibitem{rubagotti2011stability}
M.~Rubagotti, S.~Trimboli, D.~Bernardini, and A.~Bemporad, ``Stability and
  invariance analysis of approximate explicit mpc based on pwa lyapunov
  functions,'' {\em IFAC Proceedings Volumes}, vol.~44, no.~1, pp.~5712--5717,
  2011.

\bibitem{prajna2003analysis}
S.~Prajna and A.~Papachristodoulou, ``Analysis of switched and hybrid
  systems-beyond piecewise quadratic methods,'' in {\em Proceedings of the 2003
  American Control Conference, 2003.}, vol.~4, pp.~2779--2784, IEEE, 2003.

\bibitem{bemporad2000optimal}
A.~Bemporad, F.~Borrelli, and M.~Morari, ``Optimal controllers for hybrid
  systems: Stability and piecewise linear explicit form,'' in {\em Proceedings
  of the 39th IEEE Conference on Decision and Control}, vol.~2, pp.~1810--1815,
  IEEE, 2000.

\bibitem{borrelli2005dynamic}
F.~Borrelli, M.~Baoti{\'c}, A.~Bemporad, and M.~Morari, ``Dynamic programming
  for constrained optimal control of discrete-time linear hybrid systems,''
  {\em Automatica}, vol.~41, no.~10, pp.~1709--1721, 2005.

\bibitem{kerrigan2002optimal}
E.~C. Kerrigan and D.~Q. Mayne, ``Optimal control of constrained, piecewise
  affine systems with bounded disturbances,'' in {\em Proceedings of the 41st
  IEEE Conference on Decision and Control, 2002.}, vol.~2, pp.~1552--1557,
  IEEE, 2002.

\bibitem{gao2009robust}
Y.~Gao, Z.~Liu, and H.~Chen, ``Robust h-$\infty$ control for constrained
  discrete-time piecewise affine systems with time-varying parametric
  uncertainties,'' {\em IET control theory \& applications}, vol.~3, no.~8,
  pp.~1132--1144, 2009.

\bibitem{trimboli2011stability}
S.~Trimboli, M.~Rubagotti, and A.~Bemporad, ``Stability and invariance analysis
  of uncertain pwa systems based on linear programming,'' in {\em 2011 50th
  IEEE Conference on Decision and Control and European Control Conference},
  pp.~7398--7403, IEEE, 2011.

\bibitem{hovd2018parameter}
M.~Hovd and S.~Olaru, ``Parameter-dependent pwq lyapunov function stability
  criteria for uncertain piecewise linear systems,'' 2018.

\bibitem{hempel2014inverse}
A.~B. Hempel, P.~J. Goulart, and J.~Lygeros, ``Inverse parametric optimization
  with an application to hybrid system control,'' {\em IEEE Transactions on
  automatic control}, vol.~60, no.~4, pp.~1064--1069, 2014.

\bibitem{groff2019stability}
L.~B. Groff, G.~Valm{\'o}rbida, and J.~M.~G. da~Silva, ``Stability analysis of
  piecewise affine discrete-time systems,'' in {\em 2019 IEEE 58th Conference
  on Decision and Control (CDC)}, pp.~8172--8177, IEEE, 2019.

\bibitem{aydinoglu2019contact}
A.~Aydinoglu, V.~M. Preciado, and M.~Posa, ``Contact-aware controller design
  for complementarity systems,'' {\em arXiv preprint arXiv:1909.11221}, 2019.

\bibitem{nair2020modeling}
S.~H. Nair, M.~Bujarbaruah, and F.~Borrelli, ``Modeling of dynamical systems
  via successive graph approximations,'' {\em IFAC-PapersOnLine}, vol.~53,
  no.~2, pp.~977--982, 2020.

\bibitem{caverly2019lmi}
R.~J. Caverly and J.~R. Forbes, ``Lmi properties and applications in systems,
  stability, and control theory,'' {\em arXiv preprint arXiv:1903.08599}, 2019.

\end{thebibliography}

\end{document}